\newcommand{\details}[1]{}
\newtheorem{theorem}{Theorem}[section]
\newtheorem*{theorem*}{Theorem}
\newtheorem{corollary}[theorem]{Corollary}
\newtheorem*{corollary*}{Corollary}
\newtheorem{lemma}[theorem]{Lemma}
\newtheorem*{claim*}{Claim}
\newtheorem*{lemma*}{Lemma}
\newtheorem{proposition}[theorem]{Proposition}
\newtheorem*{proposition*}{Proposition}
\newtheorem{conjecture}[theorem]{Conjecture}
\newtheorem*{conjecture*}{Conjecture}
\newtheorem{def-proposition}[theorem]{Definition-Proposition}
\theoremstyle{definition}
\newtheorem*{definition*}{Definition}
\newtheorem{remark}[theorem]{Remark}
\newtheorem*{example*}{Example}
\numberwithin{equation}{section}
\newcommand{\pn}{\noindent}
\newcommand{\ZZ}{\mathbb{Z}}
\newcommand{\QQ}{\mathbb{Q}}
\newcommand{\CC}{\mathbb{C}}
\newcommand{\GG}{\mathbb{G}}
\newcommand{\PP}{\mathbb{P}}
\newcommand{\End}{\mathrm{End}}
\newcommand{\Hom}{\mathrm{Hom}}
\newcommand{\Ext}{\mathrm{Ext}}
\newcommand{\uHom}{\underline{\mathrm{Hom}}}
\newcommand{\UR}{\mathrm{UR}}
\newcommand{\W}{\mathrm{W}}
\newcommand{\F}{\mathrm{F}}
\newcommand{\T}{\mathrm{T}}
\newcommand{\HH}{\mathrm{H}}
\newcommand{\Lie}{\mathrm{Lie}}
\newcommand{\Gr}{\mathrm{Gr}}
\newcommand{\rk}{\mathrm{rk}}
\newcommand{\id}{\mathrm{id}}
\newcommand{\dR}{\mathrm{dR}}
\newcommand{\Galmot}{{\mathcal{G}}{\mathrm{al}}_{\mathrm{mot}}}
\newcommand{\Ind}{{\mathrm{Ind}}\,}
\def\Qbar{\overline{\QQ}}
\newcommand{\oK}{\overline K}
\newcommand{\gal}{{\mathrm{Gal}}(\oK/K)}
\newcommand{\cE}{\mathcal{E}}
\newcommand{\im}{\mathrm{Im}}
\newcommand{\pr}{\mathrm{pr}}
\begin{document}

\title[Third kind elliptic integrals and 1-motives]
{Third kind elliptic integrals and 1-motives}

\author{Cristiana Bertolin}
\address{Dipartimento di Matematica, Universit\`a di Torino, Via Carlo Alberto 10, Italy}
\email{cristiana.bertolin@unito.it}

\subjclass[2010]{11J81, 11J95, 11G99}

\keywords{1-motives, periods, third kind integrals}

\dedicatory{with a letter of Y. Andr\'e \\ and an appendix by M. Waldschmidt }


\begin{abstract}

In \cite{B02} we have showed that the Generalized Grothendieck's Period Conjecture applied to 1-motives, whose underlying semi-abelian variety is a product of elliptic curves and of tori,
is equivalent to a transcendental conjecture involving elliptic integrals of the first and second kind, and logarithms of complex numbers.

In this paper we investigate the Generalized Grothendieck's Period Conjecture in the case of 1-motives whose underlying semi-abelian variety is a \textit{non trivial extension} of a product of elliptic curves by a torus. This will imply the introduction of \textit{elliptic integrals of the third kind} for the computation of the periods of $M$ and therefore the Generalized Grothendieck's Period Conjecture applied to $M$ will be equivalent to a transcendental conjecture involving elliptic integrals of the first, second and third kind.

\end{abstract}


\maketitle


\tableofcontents

\section*{Introduction}

Let $\cE$ be an elliptic curve defined over $\CC$ with Weierstrass coordinate functions $x$ and $y$. On $\cE$ we have the  differential of the first kind $\omega = \frac{dx}{y},$ which is holomorphic, the differential of the second kind $
	\eta = -\frac{xdx}{y},$
 which has a double pole with residue zero at each point of the lattice $\HH_1(\cE(\CC),\ZZ)$ and no other pole, and the differential of the third kind
 \[	\xi_Q = \frac{1}{2} \frac{y-y(Q)}{x - x(Q)} \frac{dx}{y}, \]
	for any point $Q $ of $ \cE(\CC), Q \not=0,$ whose residue divisor is $D=-(0)+(-Q).$
Let $\gamma_1, \gamma_2$  be two closed paths on $\cE(\CC)$ which build a basis for the lattice $\HH_1(\cE(\CC),\ZZ)$.
In his Peccot lecture 
at the Coll\`ege de France in 1977, M. Waldschmidt observed that  the periods of the Weierstrass $\wp$-function (\ref{eq:periods-wp}) are the elliptic integrals of the first kind $ \int_{\gamma_i} \omega = \omega_i$  $(i=1,2)$,
the quasi-periods of the Weierstrass $\zeta$-function (\ref{eq:periods-zeta}) are the elliptic integrals of the second kind $ \int_{\gamma_i} \eta = \eta_i$ $(i=1,2)$,
 but \textit{there is no function whose quasi-quasi-periods are elliptic integrals of the third kind}. J.-P.~Serre answered this question furnishing the function
\[
f_q(z)= \frac{\sigma(z+q)}{\sigma(z) \sigma(q)} e^{-\zeta(q) z }  \qquad \mathrm{with}\; q \in \CC \setminus \Lambda
\] 
whose \textit{quasi-quasi periods} (\ref{eq:periods-fq}) are 
\textit{the exponentials of the elliptic integrals of the third kind}
$ \int_{\gamma_i} \xi_Q = \eta_i q - \omega_i \zeta(q)$ $(i=1,2),$ where $q$ is an elliptic logarithm of the point $Q$.

Consider now an extension $G$ of $\cE$ by $\GG_m$ parameterized by the divisor $D=(-Q)-(0)$ of $\mathrm{Pic}^0(\cE) \cong \cE^* = \underline{\Ext}^1(\cE,\GG_m)$. Since the three differentials $\{\omega, \eta,\xi_Q\}$ build a basis of the De Rham cohomology $\HH^1_{\dR}(G)$ of the extension $G$, elliptic integrals of the third kind play a role in the
 Generalized Grothendieck's Period Conjecture (\ref{eq:GCP}). The aim of this paper is to understand this role applying the Generalized Grothendieck's Period Conjecture to 1-motives whose underlying semi-abelian variety is a \textit{non trivial extension} of a product of elliptic curves by a torus.
At the end of this paper the reader can find 
\begin{itemize}
	\item an appendix by M. Waldschmidt in which he quotes transcendence results concerning elliptic integrals of the third kind;
	\item a letter of Y. Andr\'e containing an overview of Grothendieck's Period Conjecture and its generalization.
\end{itemize}

A 1-motive $M=[u:X \rightarrow G]$ over a sub-field $K$ of $\CC$ consists of a finitely generated free $\ZZ$-module $X$, an extension $G$ of an abelian variety by a torus, and a homomorphism $u:X \to G(K)$. Denote by $M_\CC $ the 1-motive defined over $\CC$ obtained from $M$ extending the scalars from $K$ to $\CC$. In \cite{D75} Deligne associates to the 1-motive $M$ 
\begin{itemize}
	\item its De Rham realization $\T_{\dR}(M)$: it is the finite dimensional $K$-vector space $\Lie (G^\natural)$, with $ M^\natural =[u:X \rightarrow G^\natural]$ the universal extension of $M$ by the vector group 
	\par\noindent 
	$\Hom(\Ext^1(M,\GG_a),\GG_a)$,
	\item its Hodge realization $\T_{\QQ}(M_\CC)$: it is the finite dimensional $\QQ$-vector space $\T_{\ZZ}(M_\CC) \otimes_\ZZ \QQ$, with $\T_{\ZZ}(M_\CC)$ the fibered product of $\Lie (G)$ and $X$ over $G$ via the exponential map $ \exp : \Lie (G) \to G$ and the homomorphism $u:X \to G.$ The $\ZZ$-module $\T_{\ZZ}(M_\CC)$ is in fact endowed with a structure of $\ZZ$-mixed Hodge structure, without torsion, of level $\leq 1$, and of type $\{(0,0),(0,-1),(-1,0), (-1,-1)\}.$   
\end{itemize} 
Since the Hodge realizations attached to 1-motives are mixed Hodge structures, 1-motives are mixed motives.
In particular they are the mixed motives coming geometrically from varieties of dimension $\leq 1$. 
In \cite[(10.1.8)]{D75}, Deligne shows that the De Rham and the Hodge realizations of $M$ are isomorphic
\begin{equation} \label{eq:betaM}
	\beta_M: \T_{\dR}(M) \otimes_K \CC  \longrightarrow \T_{\QQ}(M_\CC)\otimes_K \CC.
\end{equation}
The \textit{periods of M} are the coefficients of the matrix which represents 
this isomorphism with respect to $K$-bases.

By Nori's and Ayoub's works (see \cite{Ay14} and \cite{N00}), it is possible to endow the category of 1-motives with a tannakian structure with rational coefficients, and therefore to define the motivic Galois group  
\[\Galmot (M)\]
of a 1-motive $M$ as the fundamental group of the tannakian sub-category $< M>^\otimes$ generated by $M$ (see \cite[Def 6.1]{D89} or \cite[Def 8.13]{D90}). Applying the Generalized Grothendieck's Period Conjecture proposed by Andr\'e (see conjecture (?!) of Andr\'e's letter)
to 1-motives we get 

\begin{conjecture}[Generalized Grothendieck's Period Conjecture for 1-motives]
 Let $M$ be a 1-motive defined over a sub-field $K$ of $\CC$, then
 \begin{equation}
\mathrm{tran.deg}_{\QQ}\, K  (\mathrm{periods}(M)) \geq \dim \Galmot (M)
\label{eq:GCP}
 \end{equation}
where $K (\mathrm{periods}(M))$ is the field generated over $K$ by the periods of $M$. 
\end{conjecture}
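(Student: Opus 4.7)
The statement is the Generalized Grothendieck Period Conjecture applied to 1-motives, which is a deep open problem---no unconditional proof is available even for a single elliptic curve. What one can realistically aim for is a \emph{reformulation} that makes (\ref{eq:GCP}) concrete and amenable to transcendence theory. The plan is to (a) compute or bound $\dim \Galmot(M)$ combinatorially from the weight filtration of $M$, (b) exhibit an explicit $K$-basis of $\T_{\dR}(M)$ and of $\T_{\QQ}(M_\CC)$ so that the entries of $\beta_M$ become classical integrals, and then (c) verify the resulting transcendence inequality in the cases where the current state of transcendence theory reaches.

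For (a), I would use the weight filtration $\W_{-2} M \subset \W_{-1} M \subset \W_0 M = M$ together with the tannakian formalism: $\Galmot(M)$ fits in an extension of the reductive motivic Galois group of $\Gr^{\W} M$---a product of copies of $\GG_m$, elliptic curves, and lattices---by a unipotent radical $\UR(M)$ controlled by the classifying data $u : X \to G$ and by the isomorphism class of $G$ as an extension of the abelian part by the torus. Its dimension can be read off from the endomorphism rings of the elliptic factors and the ranks of the relevant extension groups. For (b), I would build the universal vector extension $M^\natural$ explicitly. A $K$-basis of $\T_{\dR}(M) = \Lie(G^\natural)$ is assembled from $\{\omega,\eta\}$ on each elliptic factor, a basis of $\Lie(T)$ dual to the characters, a basis of $X \otimes_{\ZZ} K$, and---crucially when the extension is non-trivial---the third-kind differentials $\xi_Q$ coming from the classifying divisors $D = (-Q)-(0)$. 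A basis of $\T_{\QQ}(M_\CC)$ is supplied by cycles $\gamma_i$ on the elliptic factors, characters of $T$, and the points $u(x) \in G$. Pairing these bases yields periods that are precisely $\omega_i$, $\eta_i$, $\eta_i q - \omega_i \zeta(q)$, logarithms of algebraic numbers, and elliptic logarithms of the $u(x)$, matching the introduction via Serre's function $f_q$.

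For (c), once the conjecture is translated into a concrete lower bound for the transcendence degree of the field generated by the quantities just listed, I would invoke the results of Chudnovsky, Masser, W\"ustholz and Waldschmidt collected in Waldschmidt's appendix, together with the analytic subgroup theorem, to handle as many sub-cases as possible. The sub-case where $G$ is a trivial extension is already settled in \cite{B02}, so the genuinely new content here is to identify the contribution of the $\xi_Q$-periods and to match it with the dimension contributed by $\UR(M)$.

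\textbf{The main obstacle} is transcendental, not motivic: algebraic independence of the values of $f_q$ at lattice points---that is, of enough elliptic integrals of the third kind---goes beyond what transcendence theory currently proves. Accordingly, the realistic deliverable for this paper is the \emph{equivalence} advertised in the abstract, showing that (\ref{eq:GCP}) for the class of $M$ considered is equivalent to a sharp but still open transcendence conjecture, rather than an unconditional proof of (\ref{eq:GCP}) itself.
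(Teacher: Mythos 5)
Your proposal correctly recognizes that the statement is an open conjecture which the paper does not (and cannot) prove unconditionally, and your three-part reformulation strategy---compute $\dim\Galmot(M)$ via the weight filtration and unipotent radical, compute the period matrix explicitly using $\omega$, $\eta$, $\xi_Q$ and Serre's $f_q$, then compare against known transcendence results---is exactly what the paper does in Theorem \ref{eq:dimUR}, Corollary \ref{eq:dimGalMot}, Proposition \ref{proof-periods}, and Theorem \ref{thmMain}. Since you take essentially the same route as the paper and correctly identify the deliverable as the equivalence with the 1-motivic elliptic conjecture rather than a proof of the inequality itself, there is nothing to add.
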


In \cite{B02} we showed that the conjecture (\ref{eq:GCP})
applied to a 1-motive of type 
\[ M=[ u:\ZZ^{r} \,  
 \longrightarrow \,\Pi^n_{j=1} {\cE}_j \times {\GG}_m^s] \]
  is equivalent to the elliptico-toric conjecture (see \cite[1.1]{B02}) which involves elliptic integrals of the first and second kind and logarithms of complex numbers. Consider now the 1-motive  
\begin{equation}\label{eq:M}
M=[ u:\ZZ^{r} \, \longrightarrow \, G]
\end{equation}
 where $G$ is a \textit{non trivial} extension of a product $\Pi^n_{j=1} \cE_j $ of pairwise not isogenous elliptic curves by the torus $\GG_m^s.$
In this paper we introduce \textit{the 1-motivic elliptic conjecture} (\S \ref{conjecture}) which involves elliptic integrals of the first, second and third kind. Our main Theorem is that 
this 1-motivic elliptic conjecture is equivalent to the Generalized Grothendieck's Period Conjecture applied to the 1-motive (\ref{eq:M}) (Theorem \ref{thmMain}). The presence of elliptic integrals of the third kind in the 1-motivic elliptic conjecture corresponds to the fact that the extension $G$ underlying $M$ is not trivial. If in the 1-motivic elliptic conjecture we assume that the points defining the extension $G$ are trivial, then this conjecture coincides with the elliptico-toric conjecture stated in\cite[1.1]{B02} (see Remarks \ref{Rk1}). Observe that the 1-motivic elliptic conjecture contains also the Schanuel conjecture (see Remarks \ref{Rk2}).

 In Section \ref{EllipticIntegral} we recall basic facts about differential forms on elliptic curves.

 In Section \ref{periods} we study the short exact sequences which ``d\'evissent'' the Hodge and De Rham realizations of 1-motives and which are 
 induced by the weight filtration of 1-motives. In Lemma \ref{lem:decomposition} we prove that instead of working with the 1-motive (\ref{eq:M}) we can work with a direct sum of 1-motives having $r=n=s=1$. 
Using Deligne's construction of a 1-motive starting from an open singular curve,
in \cite[\S 2]{Ber08} D. Bertrand has computed
 the periods of the 1-motive (\ref{eq:M}) with $r=n=s=1.$ Putting together Lemma \ref{lem:decomposition} and  Bertrand's calculation of the periods in the case $r=n=s=1$, we compute explicitly the periods of the 1-motive (\ref{eq:M}) (see Proposition \ref{proof-periods}).

 In section \ref{motivicGaloisgroup}, which is the most technical one, we study the motivic Galois group of 1-motives.
 We will follow neither Nori and Ayoub's theories nor Grothendieck's theory involving mixed realizations, but we will work in a completely geometrical setting using \textit{algebraic geometry on tannakian categories}.
In Theorem \ref{eq:dimUR} we compute explicitly the dimension of the unipotent radical of the motivic Galois group of an arbitrary 1-motive over $K$. Then, as a corollary, we calculate explicitly the dimension of the motivic Galois group of the 1-motive (\ref{eq:M}) (see Corollary \ref{eq:dimGalMot}).
 For this last result, we restrict to work with a 1-motive whose underlying extension $G$ involves a product of elliptic curves, because only in this case we know explicitly the dimension of the reductive part of the motivic Galois group (in general, the dimension of the motivic Galois group of an abelian variety is not known). 
 
In section \ref{conjecture} we state the 1-motivic elliptic conjecture and we prove our main Theorem
 \ref{thmMain}.

In section \ref{lowDim} we compute explicitly the Generalized Grothendieck's Period Conjecture in the low dimensional case, that is assuming $r=n=s=1$ in (\ref{eq:M}). In particular we investigate the cases where $\mathrm{End}(\cE) \otimes_\ZZ \QQ$-linear dependence and torsion properties affect the dimension of the unipotent radical of $\Galmot (M)$.

\section*{Acknowledgements}

I want to express my gratitude to M. Waldschmidt for pointing out to me the study of third kind elliptic integrals and for his appendix. I am very grateful to Y. Andr\'e for his letter and for the discussions we had about the motivic Galois group. I also thank D. Bertrand and P. Philippon for their comments on an earlier version of this paper.
This paper was written during a 2 months stay at the IHES. The author thanks the Institute for the wonderful work conditions.
 

\section*{Notation}

Let $K$ be a sub-field of $\CC$ and denote by $\overline{K}$ its algebraic closure. 

A 1-motive $M=[u:X \rightarrow G]$ over $K$ consists of a
group scheme $X$ which is locally for the \'etale
topology a constant group scheme defined by a finitely generated free
$\ZZ \,$-module, an extension $G$ of an abelian variety $A$ by a torus $T$, and a homomorphism $u:X \to G(K)$. In this paper we will consider above all 1-motives in which $X= \ZZ^r$ and $G$ is an extension of a finite product $\Pi^n_{j=1} \cE_j $ of elliptic curves by the torus $\GG_m^s$ (here $r,n$ and $s$ are integers bigger or equal to 0).

There is a more symmetrical definition of 1-motives. In fact to have the 1-motive $M=[u:\ZZ^r \rightarrow G]$ is equivalent to have the 7-tuple $(\ZZ^r,\ZZ^s, \Pi^n_{j=1} \cE_j ,\Pi^n_{j=1} \cE_j^*, v ,v^*,\psi)$ where
\begin{itemize}
	\item $\ZZ^s$ is the character group of the torus $\GG_m^s$ underlying the 1-motive $M$.
	\item $v:\ZZ^r \rightarrow \Pi^n_{j=1} \cE_j$ and $v^*:\ZZ^s \rightarrow \Pi^n_{j=1} \cE_j^*$ are two morphisms of $K$-group varieties (here $\cE_j^* := \underline{\Ext}^1(\cE_j,\GG_m)$ is the Cartier dual of the elliptic curve $\cE_j$).
	To have the morphism $v$ is equivalent to have $r$ points $P_k=(P_{1k}, \ldots, P_{nk})$ of $ \Pi^n_{j=1} \cE_j(K)$ with $k=1, \ldots, r$, whereas to have the morphism $v^*$ is equivalent  to have $s$ points $Q_i=(Q_{1i}, \ldots, Q_{ni})$ of $ \Pi^n_{j=1} \cE_j^*(K)$ with $i=1, \ldots, s.$ Via the isomorphism 
	 $\underline{\Ext}^1(\Pi^n_{j=1}\cE_j,\GG_m^s) \cong  (\Pi_{j=1}^n \cE_j^*)^s ,$
	to have the $s$ points $Q_i=(Q_{1i}, \ldots, Q_{ni})$ is equivalent to have the extension $G$ of $\Pi^n_{j=1} \cE_j$ by $\GG_m^s$. 
	\item $\psi$ is a trivialization of the pull-back $(v,v^*)^*\mathcal{P}$ via $(v,v^*)$ of the Poincar\' e biextension $\mathcal{P}$ of $(\Pi^n_{j=1} \cE_j,\Pi^n_{j=1} \cE_j^*)$ by $\GG_m$. To have this trivialization $\psi$ is equivalent to have $r$ points $R_k \in G(K)$ with $k=1, \ldots, r$ such that the image of $R_k$ via the projection $G \to \Pi^n_{j=1} \cE_j$ is $P_k=(P_{1k}, \ldots, P_{nk})$, and so finally to have the morphism $u:\ZZ^r \rightarrow G. $ 
\end{itemize}

The index $k$, $0 \leq k \leq r,$ is related to the lattice $\ZZ^r$, the index $j$, $0 \leq j \leq n,$ is related to the elliptic curves, and the index $i$, $0 \leq i \leq s,$ is related to the torus $\GG_m^s$. For $j=1,  \ldots, n$, we index with a $j$ all the data related to the elliptic 
curve $\cE_j$: for example we denote by
$\wp_j(z)$ the Weierstrass $\wp$-function of $\cE_j$, by
$\omega_{j1}, \omega_{j2}$ its periods, ...

On any 1-motive $M=[u:X \rightarrow G] $ it is defined an increasing filtration  $\W_{\bullet}$, called the \textit{weight filtration} of $M$:
$\W_{0}(M)=M,  \W_{-1}(M)=[0 \to G], 
\W_{-2}(M)=[0 \to T].$
If we set ${\Gr}_{n}^{\W} :=
\W_{n} / \W_{n-1},$ we have ${\Gr}_{0}^{\W}(M)= 
[ X \to 0], {\Gr}_{-1}^{\W}(M)= 
[0 \to A]$ and $ {\Gr}_{-2}^{\W}(M)= 
[0 \to T].$

Two 1-motives $M_i=[u_i:X_i \rightarrow G_i]$ over $K$ (for $i=1,2$) are isogeneous is there exists a morphism of complexes $(f_X,f_G):M_1 \to M_2$ such that $f_X:X_1 \to X_2$ is injective with finite cokernel, and $f_G:G_1 \to G_2$ is surjective with finite kernel. 
Since \cite[Thm (10.1.3)]{D75} is true modulo isogenies, two isogeneous 1-motives have the same periods. Moreover, two isogeneous 1-motives build the same tannakian category and so they have the same motivic Galois group. Hence in this paper \textit{we can work modulo isogenies}. In particular the elliptic curves 
$\cE_1, \dots, \cE_n$ will be pairwise not isogenous.


\section{Elliptic integrals of third kind}\label{EllipticIntegral}

Let $\cE$ be an elliptic curve defined over $\CC$ with Weierstrass coordinate functions $x$ and $y$. 
Set $\Lambda := \HH_1(\cE(\CC),\ZZ). $ Let $\wp(z)$ be the Weierstrass $\wp$-function relative to the lattice $\Lambda$: it is a meromorphic function on $\CC$ having a double pole with residue zero at each point of $\Lambda$ and no other poles. 
Consider the elliptic exponential
\begin{align}
\nonumber	\exp_{\cE}: \CC & \longrightarrow  \cE(\CC) \subseteq \PP^2(\CC)\\
\nonumber	z & \longmapsto \exp_{\cE}(z)=[\wp(z),\wp(z)',1]
\end{align}
whose kernel is the lattice $\Lambda.$ In particular the map $\exp_{\cE}$
induces a complex analytic isomorphism between the quotient $\CC  / \Lambda$ and the $\CC$-valuated points of the elliptic curve $\cE$. In this paper, we will use small letters for elliptic logarithms of points on elliptic curves which are written with capital letters, that is $\exp_{\cE}(p)=P \in \cE (\CC)$ for any $p \in \CC$.

 Let $ \sigma(z)$ be the Weierstrass $\sigma$-function relative to the lattice $\Lambda$: it is a holomorphic function on all of $\CC$ and it has simple zeros at each point of $\Lambda$ and no other zeros. Finally let $\zeta (z)$ 
 be the Weierstrass $\zeta$-function relative to the lattice $\Lambda$: it is a meromorphic function on $\CC$ with simple poles at each point of $\Lambda$ and no other poles. We have the well-known equalities 
\[ \frac{d}{dz} \log \sigma(z)= \zeta(z) \quad \mathrm{and} \quad \frac{d}{dz} \zeta(z)= -\wp(z).  \]
  
Recall that a meromorphic differential 1-form is of the \emph{first kind} if it is holomorphic everywhere, of the \emph{second kind} if the residue at any pole vanishes, and of the \emph{third kind}  in general. On the elliptic curve $\cE$ we have the following differential 1-forms:
\begin{enumerate}
	\item the differential of the first kind 
	\begin{equation}\label{eq:diffFirstk}
	\omega = \frac{dx}{y}, 
	\end{equation}
	  which has neither zeros nor poles and which is invariant under translation. We have that $\exp_{\cE}^{*}(\omega) = dz.$
	\item the differential of the second kind  
	\begin{equation}\label{eq:diffSecondk}
	\eta = -\frac{xdx}{y}.
	\end{equation}
	 In particular $\exp_{\cE}^{*}(\eta) = -\wp(z) dz$ which has a double pole with residue zero at each point of $\Lambda$ and no other poles.
	\item the differential of the third kind
		\begin{equation}\label{eq:diffThirdk}
	\xi_Q = \frac{1}{2} \frac{y-y(Q)}{x - x(Q)} \frac{dx}{y}
	\end{equation}
	 for any point $Q $ of $ \cE(\CC), Q \not=0.$ The residue divisor of $\xi_Q$ is $-(0)+(-Q).$
	If we denote $q \in \CC$ an elliptic logarithm of the point $Q$, that is $\exp_{\cE}(q)=Q$, we have that 
	 \[\exp_{\cE}^{*}(\xi_Q) =  \frac{1}{2} \frac{\wp'(z)- \wp'(q)}{\wp(z) - \wp(q)} dz, \]
	  which has residue -1 at each point of $\Lambda$. 
\end{enumerate}

The 1-dimensional $\CC$-vector space of differentials of the first kind is $\HH^0(\cE, \Omega^1_\cE).$
The 1-dimensional $\CC$-vector space of differentials of the second kind modulo holomorphic differentials and exact differentials is $\HH^1(\cE, \mathcal{O}_\cE).$ In particular the first De Rham cohomology group  $\HH^1_\dR(\cE)$ of the elliptic curve $\cE$ is the direct sum $\HH^0(\cE, \Omega^1_\cE) \oplus \HH^1(\cE, \mathcal{O}_\cE)$ of these two spaces and it has dimension 2. The $\CC$-vector space of differentials of the third kind is infinite dimensional.

The inverse map of the complex analytic isomorphism $\CC / \Lambda \to \cE(\CC)$ induced by the elliptic exponential is given by the integration $\cE(\CC) \to \CC / \Lambda, P \to \int^{P}_{O} \omega \quad \mathrm{mod} \Lambda$, where O is the neutral element for the group law of the elliptic curve.

Let $\gamma_1, \gamma_2$  be two closed paths on $\cE(\CC)$ which build a basis of $\HH_1(\cE_\CC,\QQ)$. Then \textit{the elliptic integrals of the first kind} $ \int_{\gamma_i} \omega = \omega_i$ $(i=1,2)$ are \textit{the periods of the Weierstrass $\wp$-function}: 
\begin{equation}\label{eq:periods-wp}
 \wp(z+\omega_i)= \wp(z) \quad \quad  \mathrm{for} \; i=1,2.
\end{equation}
 Moreover \textit{the elliptic integrals of the second kind}
$ \int_{\gamma_i} \eta = \eta_i$ $(i=1,2)$ are \textit{the quasi-periods of the Weierstrass $\zeta$-function}:
\begin{equation}\label{eq:periods-zeta}
 \zeta(z+\omega_i)= \zeta(z) + \eta_i  \quad \quad  \mathrm{for} \; i=1,2.
 \end{equation}
Consider Serre's function 
\begin{equation}\label{eq:def-fq}
f_q(z)= \frac{\sigma(z+q)}{\sigma(z) \sigma(q)} e^{-\zeta(q) z }  \qquad \mathrm{with}\; q \in \CC \setminus \Lambda
\end{equation}
whose logarithmic differential is
\begin{equation}\label{eq:expEXiq}
\frac{f_q'(z)}{f_q(z)} dz = \frac{1}{2} \frac{\wp'(z)- \wp'(q)}{\wp(z) - \wp(q)} dz =\exp_{\cE}^{*}(\xi_Q)
\end{equation}
(see \cite{W84} and \cite[\S 2]{Ber08}). \textit{The exponentials of the elliptic integrals of the third kind}
$ \int_{\gamma_i} \xi_Q = \eta_i q - \omega_i \zeta(q)$ $(i=1,2)$ are \textit{the quasi-quasi periods} of the function $f_q(z):$
\begin{equation}\label{eq:periods-fq}
f_q(z+ \omega_i)= f_q(z) e^{\eta_i q - \omega_i \zeta(q)} \quad \quad  \mathrm{for} \; i=1,2.
\end{equation}
As observed in \cite{W84}, we have that   
\begin{equation}
\frac{f_q(z_1+ z_2)}{f_q(z_1)f_q( z_2)}= \frac{\sigma(q+z_1+z_2)\sigma(q) \sigma(z_1)\sigma(z_2)}{\sigma(q+z_1)\sigma(z_1+z_2)\sigma(q+z_2)}.
\label{eq:fq-sigma}
\end{equation}

Consider now an extension $G$ of our elliptic curve $\cE$ by $\GG_m, $ which is defined over $\CC$. Via the isomorphism $\mathrm{Pic}^0(\cE) \cong \cE^* = \underline{\Ext}^1(\cE,\GG_m)$, to have the extension $G$ 
 is equivalent to have a divisor $D=(-Q)-(0)$ of $\mathrm{Pic}^0(\cE) $ or a point $-Q$ of $ \cE^*(\CC)$. In this paper we identify $\cE$ with $\cE^*$. 
 A basis of the first De Rham cohomology group  $\HH^1_\dR(G)$ of the extension $G$ is given by $\{\omega, \eta, \xi_Q \}$. Consider the semi-abelian exponential
\begin{equation}\label{eq:semiablog}
	\exp_{G}: \CC^2  \longrightarrow  G(\CC) \subseteq \PP^4(\CC)
	\end{equation}
\[	(w,z) \longmapsto \exp_{G}(w,z)=\sigma(z)^3 \Big[\wp(z),\wp(z)',1, e^{w} f_q(z), e^{w} f_q(z) \Big( \wp(z) + \frac{\wp'(z)- \wp'(q)}{\wp(z)- \wp(q)} \Big) \Big]
\]
 whose kernel is $\HH_1(G(\CC),\ZZ)$. A basis of the Hodge realization  $\HH_1(G(\CC),\QQ)$ of the extension $G$ is given by 
  a closed path $\delta_{Q}$ around $Q$ on $G(\CC)$ and 
  two closed paths $\tilde{\gamma}_1, \tilde{\gamma}_2$ on $G(\CC)$ which lift a basis  $\{\gamma_1, \gamma_2\}$ of $\HH_1(\cE_\CC,\QQ)$ via the surjection $ \HH_1(G_\CC,\QQ) \rightarrow \HH_1(\cE_\CC,\QQ).$
 We have that  
  \begin{equation}\label{eq:expGXiq}
 \exp_{G}^{*}(\xi_Q) = dw + \frac{f_q'(z)}{f_q(z)} dz.
  \end{equation}


\section{Periods of 1-motives involving elleptic curves}\label{periods}

Let $M=[u:X \to G]$ be a 1-motive over $K$ with $G$ an extension of an abelian variety $A$ by a torus $T$. 
As recalled in the introduction, to the 1-motive $M_{\CC}$ obtained from $M$ extending the scalars from $K$ to $\CC$,
we can associate its Hodge realization ${\T}_{\QQ}(M_\CC)= (\Lie(G_\CC)\times_G X) \otimes_\ZZ \QQ $ which is endowed with the weight filtration (defined over the integers) ${\W}_{0}{\T}_{\ZZ}(M_\CC) =\Lie(G_\CC)\times_G X, {\W}_{-1}{\T}_{\ZZ}(M_\CC) = {\HH}_1(G_\CC,\ZZ),  {\W}_{-2}{\T}_{\ZZ}(M_\CC) = {\HH}_1( T_\CC,\ZZ).$
 In particular we have that 
 ${\Gr}_0^{\W}{\T}_{\ZZ}(M_\CC)\cong X,   {\Gr}_{-1}^{\W}{\T}_{\ZZ}(M_\CC)
 \cong {\HH}_{1}(A_\CC,\ZZ)$ and
  $ {\Gr}_{-2}^{\W}{\T}_{\ZZ}(M_\CC) \cong {\HH}_{1}(T_\CC,\ZZ).$
Moreover to $M$ we can associate its De Rham realization ${\T}_{\dR}(M) = \Lie (G^\natural)$, where $M^\natural=[X \rightarrow G^\natural]$ is the universal vectorial extension of $M$, which is endowed with the Hodge filtration ${\F}^0{\T}_{\dR}(M)= \ker \big( \Lie ( G^\natural) \rightarrow \Lie ( G) \big).$

The weight filtration induces for the Hodge realization the short exact sequence 
\begin{equation}\label{eq:Hodge}
0 \longrightarrow \HH_1(G_\CC,\ZZ)  \longrightarrow \T_{\ZZ}  ( M_\CC) \longrightarrow \T_{\ZZ}  (X) \longrightarrow 0 
\end{equation}
which is not split in general. On the other hand, for the De Rham realization we have that

\begin{lemma}
 The short exact sequence, induced by the weight filtration,
\begin{equation}\label{eq:DRham0}
 0 \longrightarrow {\T}_{\dR}(G)  \longrightarrow {\T}_{\dR}(M) \longrightarrow {\T}_{\dR}(X) \longrightarrow 0 
\end{equation}
is canonically split.
\end{lemma}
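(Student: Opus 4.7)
The plan is to realise the sequence of the lemma as $\Lie$ applied to a corresponding short exact sequence of commutative algebraic groups obtained from the universal vectorial extension of $M$, and then to extract a canonical splitting from the extra datum $u^\natural : X \to G^\natural$ that is built into this extension.

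First, I would apply the universal vectorial extension functor $(-)^\natural$ to the weight-filtration short exact sequence of 1-motives
\[ 0 \longrightarrow [0 \to G] \longrightarrow M \longrightarrow [X \to 0] \longrightarrow 0. \]
Exactness of $(-)^\natural$ (see Deligne \cite[\S 10]{D75}) yields a short exact sequence
\[ 0 \longrightarrow [0\to G]^\natural \longrightarrow M^\natural \longrightarrow [X\to 0]^\natural \longrightarrow 0, \]
in which $[0\to G]^\natural$ is the universal vectorial extension of the semi-abelian variety $G$ (viewed as a pure 1-motive) and $[X\to 0]^\natural = [X \hookrightarrow X \otimes_\ZZ \GG_a]$ is the universal extension of $[X\to 0]$. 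Reading off the underlying group-scheme pieces produces a short exact sequence of smooth commutative algebraic groups, and passing to $\Lie$ -- exact on smooth commutative group schemes in characteristic zero -- recovers the sequence of the lemma, via the identifications $\T_{\dR}(G) = \Lie [0\to G]^\natural$, $\T_{\dR}(M) = \Lie G^\natural$, and $\T_{\dR}(X) = X \otimes_\ZZ K$.

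Second, to exhibit a canonical splitting, I would rely on two ingredients. On the one hand, in characteristic zero $\Ext^1(V, H) = 0$ for any vector group $V$ and any smooth commutative algebraic group $H$ (a quick Chevalley-decomposition argument disposes of the cases $H = \GG_a, \GG_m$, and $H$ abelian, and the general case then follows by the long exact sequence); thus the sequence of algebraic groups splits, giving in particular a $\Lie$-algebra splitting of the desired sequence. On the other hand, the universal morphism $u^\natural : X \to G^\natural$ composes with the quotient $G^\natural \to X \otimes \GG_a$ to give the tautological map $x \mapsto x \otimes 1$, and by requiring compatibility with this datum together with the Hodge filtration $F^0 \T_{\dR}(M) = \ker(\Lie G^\natural \to \Lie G)$, one singles out a unique $K$-linear section $X \otimes K \to \T_{\dR}(M)$ that is functorial in $M$.

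The main obstacle will be justifying the canonicity claim rigorously: one needs to verify that the compatibility requirements above leave no residual freedom of choice in the section, and that the result is functorial in morphisms of 1-motives. This reduces to unpacking Deligne's pushout construction of $M^\natural$ in \cite[\S 10]{D75} and carefully tracing how the universal property of $(-)^\natural$ propagates to the Lie algebra level, so that the splitting on $\T_{\dR}(M)$ is determined intrinsically by the 1-motive $M$.
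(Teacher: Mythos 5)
Your proposal and the paper's proof rest on the same underlying phenomenon (extensions against the lattice $X$ are trivial) but take genuinely different routes. The paper stays in the category of $K$-vector spaces: it applies $\Hom(-,\GG_a)$ to the weight sequence $0\to G\to M\to X[1]\to 0$ to obtain $0\to\Hom(X,\GG_a)\to\Ext^1(M,\GG_a)\to\Ext^1(G,\GG_a)\to 0$ (using $\Ext^1(X,\GG_a)=0$ for exactness on the right), dualizes, and produces a section of the resulting sequence landing in $\F^0\T_{\dR}(M)\cong\Hom(\Ext^1(M,\GG_a),\GG_a)$; the inclusion $\F^0\T_{\dR}(M)\hookrightarrow\T_{\dR}(M)$ then gives the section of~(\ref{eq:DRham0}). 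You instead apply the exact functor $(-)^\natural$ to the weight sequence of $1$-motives, read off the resulting short exact sequence of commutative group schemes whose quotient is $X\otimes_\ZZ\GG_a$, and take $\Lie$. This is a valid alternative presentation of~(\ref{eq:DRham0}), and your first ingredient for the splitting (vanishing of $\Ext^1$ of a vector group against a smooth commutative group in characteristic zero) is sound; it is essentially the group-scheme avatar of the paper's $\Ext^1(X,\GG_a)=0$.

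The snag is your second, canonicity ingredient. The map $u^\natural:X\to G^\natural$ is a homomorphism of abstract groups into the $K$-points $G^\natural(K)$, not a $K$-linear map into $\Lie(G^\natural)$. Its composite with $G^\natural\to X\otimes_\ZZ\GG_a$ is indeed the tautological map on $K$-points, but this produces no $K$-linear arrow $\T_{\dR}(X)=X\otimes K\to\T_{\dR}(M)$, so $u^\natural$ cannot be what singles out the section. The intuition that the Hodge filtration is the right organizing tool is correct: a $K$-linear section should be sought inside $\F^0\T_{\dR}(M)=\ker(\Lie G^\natural\to\Lie G)$, which surjects onto $\T_{\dR}(X)$. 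But obtaining that section means splitting the dual of the $\Ext^1$ sequence rather than appealing to $u^\natural$, which is precisely what the paper does. In short: keep the $(-)^\natural$ and $\Lie$ reduction and the $\Ext^1$-vanishing input, but drop $u^\natural$ from the canonicity argument and replace it with the dualized $\Ext^1$ sequence as in the paper.
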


\begin{proof}
Consider the short exact sequence $0 \to G \to M \to X[1] \to 0$. Applying $\Hom(-,\GG_a)$ we get the short exact sequence of finitely dimensional $K$-vector spaces
\[ 0 \longrightarrow \Hom (X,\GG_a) \longrightarrow \Ext^1(M,\GG_a) \to\Ext^1(G,\GG_a) \longrightarrow 0 \]
Taking the dual we obtain the short exact sequence
\[ 0 \longrightarrow \Hom (\Ext^1(G,\GG_a),\GG_a) \longrightarrow \Hom (\Ext^1(M,\GG_a),\GG_a) \longrightarrow X  \to 0 \]
which is split since $\Ext^1(X, \GG_a)=0$.
Now consider the composite of the section 
\pn $X \to \Hom (\Ext^1(M,\GG_a),\GG_a)$ with the inclusion 
$\Hom (\Ext^1(M,\GG_a),\GG_a) \to G^\natural$.
Recalling that  ${\F}^0{\T}_{\dR}(M) \cong \Hom (\Ext^1(M,\GG_a),\GG_a)$, if we 
take Lie algebras we get
the arrow $\T_{\dR} (X) = X \otimes K \to {\F}^0{\T}_{\dR}(M) \to  {\T}_{\dR}(M) = \Lie (G^\natural)$ which is a section of the exact sequence (\ref{eq:DRham0}).
\end{proof}

Denote by $\HH_{\dR}(M) $ the dual $K$-vector space  of ${\T}_{\dR}(M)$. By the above Lemma we have that
\begin{equation}\label{eq:DRham}
\HH_{\dR}(M) = \HH_{\dR}^1(G) \oplus \HH_{\dR}^1(X) .
\end{equation}

Consider now a 1-motive $M=[u:\ZZ^r \rightarrow G]$ defined over $K$, 
where $G$ is an extension of a finite product $\Pi^n_{j=1} \cE_j $ of elliptic curves by the torus $\GG_m^s$. Let $\{ z_k \}_{k=1, \dots, r}$ be a basis of $\ZZ^r$ and let  $\{ t_i \}_{i=1, \dots, s}$ be a basis of the character group $\ZZ^s$ of $\GG_m^s$. For the moment, in order to simplify notation, denote by $A$ the product of elliptic curves $\Pi^n_{j=1} \cE_j$.
Denote by $G_i$ the push-out of G by $t_i: \GG_m^s \to \GG_m$, which is the extension of $A$ by $\GG_m$ parameterized by the point $v^*(t_i)=Q_i=(Q_{1i}, \dots, Q_{ni})$, and by $R_{ik}$ the $K$-rational point of $G_i$ 
above $v(z_k)=P_k=(P_{1k}, \dots, P_{nk})$.
Consider the 1-motive defined over $K$
\[M_{ik}= [u_{ik}:z_k \ZZ \rightarrow G_i]\]
 with $u_{ik}(z_k)= R_{ik} $ for $i=1, \dots, s$ and $k=1, \dots, r$.
In \cite[Thm 1.7]{B02-2} we have proved geometrically that the 1-motives $M=[u:\ZZ^r \rightarrow G]$ and $\oplus_{i=1}^s \oplus_{k=1}^r M_{ik}$    generate the same tannakian category.  
 Via the isomorphism $\underline{\Ext}^1(\Pi^n_{j=1}\cE_j,\GG_m) \cong  \Pi_{j=1}^n \underline{\Ext}^1(\cE_j,\GG_m) ,$ 
 the extension $G_i$ of $A$ by $\GG_m$ parametrized by the point $v^*(t_i)=Q_i=(Q_{1i}, \dots, Q_{ni})$ corresponds to the product of extensions $G_{1i} \times G_{2i}  \times \dots \times  G_{ni}$ where 
 $ G_{ji}$ is an extension of $\cE_j$ by $\GG_m$ parametrized by the point $Q_{ji}$, and the $K$-rational point $R_{ik}$ of $G_i$ living above $P_k=(P_{1k}, \dots, P_{nk})$ corresponds to the 
 $K$-rational points $(R_{1ik}, \dots, R_{nik})$ with $R_{jik} \in  G_{ji}(K) $ living above $P_{jk} \in \cE_j (K).$
 Consider the 1-motive defined over $K$
 \begin{equation}\label{eq:jik}
M_{jik}= [u_{jik}:z_k \ZZ \rightarrow G_{ji}]
\end{equation}
with $u_{jik}(z_k)= R_{jik} $ for $i=1, \dots, s$, $k=1, \dots, r$ and $j=1, \dots, n.$  Let $(l_{jik},p_{jk}) \in \CC^2$ be a semi-abelian logarithm (\ref{eq:semiablog}) of $R_{jik},$ that is 
\begin{equation}\label{eq:l}
\exp_{G_{ji}} (l_{jik},p_{jk}) = R_{jik}.
\end{equation}

 \begin{lemma}\label{lem:decomposition}
 	The 1-motives $M$ and $\oplus_{i=1}^s \oplus_{k=1}^r \oplus_{j=1}^n M_{jik}$ generate the same tannakian category.  
 	\end{lemma}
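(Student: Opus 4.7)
The plan is to leverage \cite[Thm 1.7]{B02-2}, which asserts $\langle M \rangle^\otimes = \langle \oplus_{i=1}^s \oplus_{k=1}^r M_{ik} \rangle^\otimes$, reducing the lemma to proving, for each fixed pair $(i,k)$,
\[
\langle M_{ik} \rangle^\otimes = \langle \oplus_{j=1}^n M_{jik} \rangle^\otimes,
\]
and then summing over $(i,k)$ yields the conclusion. I would establish this equality via two inclusions, and I expect the second to be the main technical obstacle.

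For the inclusion $M_{ik} \in \langle \oplus_j M_{jik} \rangle^\otimes$, the key observation is that $G_i$ is the push-out of $\prod_j G_{ji}$ along the multiplication morphism $\mu: \GG_m^n \to \GG_m$: the external product $\prod_j G_{ji}$ is an extension of $A := \prod_j \cE_j$ by $\GG_m^n$ whose class in $\Ext^1(A, \GG_m^n) \cong \prod_j \cE_j^*$ is the tuple $(Q_{1i}, \dots, Q_{ni})$, and applying $\mu_*$ yields the class of $G_i$. I would then form the sub-1-motive $N_{ik} := [z_k \ZZ \to \prod_j G_{ji}]$ of $\oplus_j M_{jik}$ defined by the diagonal embedding $z_k \mapsto (R_{1ik}, \dots, R_{nik})$, and push $N_{ik}$ out along $\prod_j G_{ji} \twoheadrightarrow G_i$. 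By the compatibility established just before the lemma (namely, that $(R_{1ik}, \dots, R_{nik})$ maps to $R_{ik}$ under the push-out), this produces $M_{ik}$. Sub-objects and quotients by morphisms of semi-abelian varieties preserve the tannakian subcategory, so $M_{ik} \in \langle \oplus_j M_{jik} \rangle^\otimes$.

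For the reverse inclusion $\oplus_j M_{jik} \in \langle M_{ik} \rangle^\otimes$, the non-isogeny hypothesis ensures that $A = \prod_j \cE_j$ decomposes as a direct sum up to isogeny, so each $\cE_j$ and dually each $\cE_j^*$ is accessible in $\langle A \rangle^\otimes \subset \langle M_{ik} \rangle^\otimes$. Pulling back $G_i$ along the inclusion $\iota_j: \cE_j \hookrightarrow A$ produces $\iota_j^* G_i \cong G_{ji}$, since the projection $\Ext^1(A, \GG_m) \cong \prod_j \cE_j^* \to \cE_j^*$ sends $[G_i] = (Q_{1i}, \dots, Q_{ni})$ to $Q_{ji} = [G_{ji}]$. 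To recover the section $u_{jik}$, I would use the 7-tuple description of $M_{ik}$: the trivialization $\psi_{ik}$ of the pulled-back Poincar\'e biextension $(v_{ik}, v_i^*)^* \mathcal{P}$ decomposes — using bilinearity of biextensions and the fact that, by non-isogeny, the Poincar\'e biextension of $A \times A^*$ restricts trivially to off-diagonal components $\cE_j \times \cE_l^*$ for $j \neq l$ — as a product of trivializations $\psi_{jik}$ of the pulled-back Poincar\'e biextensions of $(\cE_j, \cE_j^*)$. Each such $\psi_{jik}$ gives the point $R_{jik} \in G_{ji}(K)$ and thus reconstructs $M_{jik}$ inside $\langle M_{ik} \rangle^\otimes$.

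The main obstacle is verifying that every step of the second direction — extracting individual elliptic factors via non-isogeny, pulling back the extension class of $G_i$ along sub-abelian inclusions, and decomposing the trivialization of the Poincar\'e biextension along the product structure — constitutes a tannakian operation on $M_{ik}$. Once these functorialities of Deligne's 7-tuple construction under product decompositions of the underlying abelian variety are verified, the recovery of each $M_{jik}$ becomes automatic and both inclusions combine to give the desired equality of tannakian subcategories.
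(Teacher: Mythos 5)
Your proposal follows the same route as the paper — invoke \cite[Thm 1.7]{B02-2} and then compare the tannakian subcategories by exhibiting each of $M_{ik}$ and $M_{jik}$ as a sub-quotient of the other — and you improve on the paper's write-up at one point. For the inclusion $M_{ik}\in\langle\oplus_jM_{jik}\rangle^\otimes$, the paper restricts the lattice to the diagonal $\mathrm{d}_\ZZ(\ZZ)\subset\ZZ^n$ and then asserts $[\mathrm{d}_\ZZ(\ZZ)\to\Pi_jG_{ji}]=M_{ik}$, which conflates $G_i$ (an extension of $\Pi_j\cE_j$ by $\GG_m$) with $\Pi_jG_{ji}$ (an extension by $\GG_m^n$); the precise relation is $G_i=\mu_*(\Pi_jG_{ji})$, and the extra quotient by $[0\to\ker\mu]$ is exactly the push-out you supply. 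For the reverse inclusion, the paper's $M_{ik}/[0\to\Pi_{l\neq j}G_{li}]=M_{jik}$ relies on the same tacit identification; your pull-back/trivialization decomposition is more geometric, but as you correctly flag at the end, it still does not by itself exhibit $M_{jik}$ as a sub-quotient of a tensor power of $M_{ik}$ — since $R_{ik}$ does not lie in $G_{ji}=\iota_j^*G_i$ and $G_{ji}$ is not a quotient of $G_i$, $M_{jik}$ is neither a sub-1-motive nor a quotient 1-motive of $M_{ik}$, and this is the step that needs to be pinned down in both your plan and the paper's argument. Your per-$(i,k)$ reduction rather than working with the full direct sums is a harmless repackaging.
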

 	
 	\begin{proof} As in \cite[Thm 1.7]{B02-2} we will work geometrically and because of loc. cit. it is enough to show that the 1-motives 
 		 $\oplus_{i=1}^s \oplus_{k=1}^r M_{ik}$  and  $\oplus_{i=1}^s \oplus_{k=1}^r \oplus_{j=1}^n M_{jik}$ generate the same tannakian category. Clearly
 		\[ \oplus_{j=1}^n  \Big( \oplus_{i=1}^s \oplus_{k=1}^r 
 		 M_{ik} \big/ [0 \to \Pi_{1 \leqslant l \leqslant n \atop l \not= j} G_{li} ]
 		\Big)   
 		= \oplus_{i=1}^s \oplus_{k=1}^r \oplus_{j=1}^n M_{jik} \]
 and so $< \oplus_{i=1}^s \oplus_{k=1}^r \oplus_{j=1}^n M_{jik}>^\otimes \; \;  \subset \;  \;  < \oplus_{i=1}^s \oplus_{k=1}^r M_{ik}>^\otimes.$
On the other hand, if $\mathrm{d}_\ZZ: \ZZ \to \ZZ^n$ is the diagonal morphism, for fixed $i$ and $k$ we have that
\[  \oplus_{j=1}^n M_{jik} \big/ [ \ZZ^n / \mathrm{d}_\ZZ(\ZZ) \to 0] =
[\small{\Pi}_{j} u_{jik} : \mathrm{d}_\ZZ(\ZZ) \longrightarrow G_{1i} \times G_{2i}  \times \dots \times  G_{ni}] = [u_{ik} :\ZZ  \longrightarrow G_i] =M_{ik}
 \] 
 and so 
 \[\oplus_{i=1}^s \oplus_{k=1}^r  \Big(  \oplus_{j=1}^n M_{jik} \big/ [ \ZZ^n / \mathrm{d}_\ZZ(\ZZ) \to 0] \Big) = \oplus_{i=1}^s \oplus_{k=1}^r M_{ik}   \]
 that is $ < \oplus_{i=1}^s \oplus_{k=1}^r M_{ik}>^\otimes \; \;  \subset \;  \; < \oplus_{i=1}^s \oplus_{k=1}^r \oplus_{j=1}^n M_{jik}>^\otimes .$ 
	\end{proof}

  The matrix which represents the isomorphism (\ref{eq:betaM}) for the 1-motive $M=[u:{\ZZ}^r \to G]$, where $G$ is an extension of $\Pi^n_{j=1} \cE_j $  by  $\GG_m^s$, is a huge matrix difficult to write down. The above Lemma implies that, instead of studying this huge matrix, it is enough to study the $rsn$ matrices which represent the isomorphism (\ref{eq:betaM}) for the $rsn$ 1-motives $M_{jik}= [u_{jik}:z_k \ZZ \rightarrow G_{ji}].$

  Following \cite[\S 2]{Ber08}, now we compute explicitly the periods of the  1-motive $M=[u:\ZZ \to G]$, where $G$ is an extension of one elliptic curve $ \cE$ by the torus $\GG_m.$ We need Deligne's 
  construction of $M$ starting from an open singular curve (see \cite[(10.3.1)-(10.3.2)-(10.3.3]{D75}) that we recall briefly.
   Via the isomorphism $\mathrm{Pic}^0(\cE) \cong \cE^* = \underline{\Ext}^1(\cE,\GG_m)$,
  to have the extension $G$ of $\cE$ by $\GG_m$ underlying the 1-motive $M$ is equivalent to have the divisor $D=(-Q)-(0)$ of $\mathrm{Pic}^0(\cE)$ or the point $-Q$ of $ \cong \cE^* $. We assume $Q$ to be a non torsion point.
   According to \cite[page 227]{M74}, to have the point $u(1)=R \in G(K)$ is equivalent to have a couple 
   \[(P,g_R) \in \cE(K) \times K(\cE)^*\]
    where  $\pi(R)=P \in \cE(K)$ (here $\pi: G \to \cE$ is the surjective morphism of group varieties underlying the extension $G$),
    and where $g_R:  \cE \to \GG_m, x \mapsto R+ \rho(x) -\rho(x+P)$  (here $\rho: \cE \to G$ is a section of $\pi$), is a rational function on $\cE$  whose divisor is $T^{*}_{P}D-D=(-Q+P)-(P)-(-Q)+(0)$ (here $T_P: \cE \to \cE$ is the translation by the point $P$).   We assume also $R$ to be a non torsion point.

  Now pinch the elliptic curve $\cE$ at the two points $-Q$ and $O$ and puncture it at
  two $K$-rational points $P_2$ and $P_1$ whose difference (according to the group law of $\cE$) is $P$, that is $P=P_2-P_1.$ The motivic $\HH^1$ of the open singular curve obtained in this way from $\cE$ is the 1-motive $M=[u:\ZZ \rightarrow G]$, with $u(1)=R$. We will apply Deligne's construction to each 1-motive $M_{jik}= [u_{jik}:z_k \ZZ \rightarrow G_{ji}]$ with $u_{jik}(z_k)= R_{jik} .$

 \begin{proposition}\label{proof-periods}
 Choose the following basis of the $\QQ$-vector space  $\T_{\QQ}(M_{jik \; \CC}):$
 	\begin{itemize}
	\item two closed paths $\tilde{\gamma}_{j1}, \tilde{\gamma}_{j2}$  on $G_{ji}(\CC)$ which lift the basis  $\{\gamma_{j1}, \gamma_{j2}\}$ of $\HH_1(\cE_{j \;\CC},\QQ)$ via the surjection $ \HH_1(G_{ji \; \CC},\QQ) \rightarrow \HH_1(\cE_{j \;\CC},\QQ)$; 
 		\item a closed path $\delta_{Q_{ji} }$ around $-Q_{ji}$ on $G_{ji}(\CC)$ (here we identify $G_{ji}$ with the pinched elliptic curve $\cE_j$); and
 		\item a closed path $\beta_{R_{jik}}$, which lifts the basis $\{z_k\}$ of $\T_{\QQ}(z_k \ZZ)$  via the surjection $ \T_{\QQ}  ( M_{jik \; \CC}) \rightarrow \T_{\QQ}  (z_k \ZZ) $, and whose restriction to $\HH_1(G_{ji \; \CC},\QQ)$ is a closed path $\beta_{R_{jik}|G_{ji}}$ on 
 		$G_{ji}(\CC)$ having the following properties: $\beta_{R_{jik}|G_{ji}}$ lifts a path 
 		$\beta_{P^1_{jk}P^2_{jk}}$ on $\cE_{j }(\CC) $ from $P^1_{jk}$ to $P^2_{jk}$ (with $P^2_{jk}-P^1_{jk}=P_{jk}$) via the surjection $ \HH_1(G_{ji \; \CC},\QQ) \rightarrow \HH_1(\cE_{j \;\CC},\QQ)$, and its restriction to $\HH_1(\GG_m,\QQ)$ 
 		is a path $\beta_{jik}$ on $\GG_m(\CC)=\CC^* $ from $1$ to $l_{jik} (\ref{eq:l});$
 \end{itemize}	
 	and the following basis of the $K$-vector space $\HH_{\dR}(M_{jik}):$	
 	\begin{itemize}	
        \item  the differentials of the first kind $\omega_j=\frac{dx_j}{y_j}$ (\ref{eq:diffFirstk}) and of the second kind $\eta_j=-\frac{x_jdx_j}{y_j}$ (\ref{eq:diffSecondk}) of $\cE_j$;
 		\item the differential of the third kind $\xi_{Q_{ji} }=
 		 \frac{1}{2} \frac{y_j-y_j(Q_{ji})}{x_j - x_j(Q_{ji})} \frac{dx_i}{y_j}$ (\ref{eq:diffThirdk}) of $\cE_j$, whose residue divisor is $D=(-Q_{ji})-(0)$
 		and which lifts the basis $\{\frac{dt_i}{t_i}\}$ of  $\HH_{\dR}^1(\GG_m)$ via the surjection $\HH_{\dR}^1(G_{ji}) \rightarrow \HH_{\dR}^1(\GG_m)$;
 		\item the differential $df_j$ of a rational function $f_j$ on $\cE_j$ such that $f_j(P^2_{jk})$ differs from $f_j(P^1_{jk})$ by 1.		
 	\end{itemize}
 	 These periods of the 1-motive $M=[u:{\ZZ}^r \to G]$, where $G$ is an extension of $\Pi^n_{j=1} \cE_j $  by  $\GG_m^s$, are then
 	\[1,  \omega_{j1},\omega_{j2},\eta_{j1}, \eta_{j2}, p_{jk},\zeta_j(p_{jk}),
 	\eta_{j1} q_{ji} - \omega_{j1} \zeta_j(q_{ji}),\eta_{j2} q_{ji} - \omega_{j2} \zeta_j(q_{ji}) , 
 	\log f_{q_{ji}}(p_{jk}) + l_{jik}, 2i \pi \] 
 with $e^{l_{jik}} \in K^*,$	for $j=1,  \ldots, n, k=1,  \ldots, r$ and $i=1,  \ldots,s.$
 \end{proposition}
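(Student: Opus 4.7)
The plan is to use Lemma \ref{lem:decomposition} to reduce the computation of periods of $M=[u:\ZZ^r\to G]$ to the computation of the periods of the $rsn$ elementary 1-motives $M_{jik}=[u_{jik}:z_k\ZZ\to G_{ji}]$, each of which has $r=n=s=1$. Indeed, since isogenous (and more generally tannakian-equivalent) 1-motives have the same periods, and since the period matrix of a direct sum is block-diagonal, the set of periods of $M$ is the union of the sets of periods of the $M_{jik}$. It therefore suffices to compute the entries of the $5\times 5$ comparison matrix $\beta_{M_{jik}}$ with respect to the given bases.

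For a fixed $M_{jik}$, the integration pairing $\T_{\QQ}(M_{jik\,\CC})\otimes \HH_{\dR}(M_{jik})\to \CC$ can be read off the weight filtration. The lower-right block, corresponding to the torus part, is computed by integrating $\xi_{Q_{ji}}$ along the small loop $\delta_{Q_{ji}}$ around $-Q_{ji}$; since $\xi_{Q_{ji}}$ has residue $-1$ at $-Q_{ji}$, this contributes $2i\pi$ (up to sign, which is absorbed into the orientation of $\delta_{Q_{ji}}$). The elliptic block is classical: integrating $\omega_j$ and $\eta_j$ along the lifts $\tilde\gamma_{j1},\tilde\gamma_{j2}$ produces the four entries $\omega_{j1},\omega_{j2},\eta_{j1},\eta_{j2}$, while the pairing of $\xi_{Q_{ji}}$ with $\tilde\gamma_{j1},\tilde\gamma_{j2}$ gives the Legendre-type expressions $\eta_{j1}q_{ji}-\omega_{j1}\zeta_j(q_{ji})$ and $\eta_{j2}q_{ji}-\omega_{j2}\zeta_j(q_{ji})$, which were recalled from (\ref{eq:periods-fq}) in Section \ref{EllipticIntegral} via Serre's function $f_{q_{ji}}$.

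The top-left block corresponds to the splitting of the De Rham sequence proved in the lemma preceding the proposition: integrating $df_j$ along the lifted path $\beta_{R_{jik}}$ gives $f_j(P^2_{jk})-f_j(P^1_{jk})=1$ by construction. The off-diagonal entries pairing $\omega_j$ and $\eta_j$ against $\beta_{R_{jik}}$ are, by Deligne's description of $M_{jik}$ as the $\HH^1$ of the elliptic curve $\cE_j$ pinched at $\{-Q_{ji},0\}$ and punctured at $\{P^1_{jk},P^2_{jk}\}$, the incomplete elliptic integrals from $P^1_{jk}$ to $P^2_{jk}$ along $\beta_{P^1_{jk}P^2_{jk}}$, which (by $\exp_\cE^*\omega_j=dz$ and $\exp_\cE^*\eta_j=-\wp_j(z)dz$) equal the elliptic logarithm $p_{jk}$ and the quasi-logarithm $\zeta_j(p_{jk})$ respectively.

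The main obstacle is the remaining entry, the integral of the third-kind differential $\xi_{Q_{ji}}$ along $\beta_{R_{jik}}$; this is what truly requires Deligne's open-singular-curve construction and Bertrand's analysis in \cite[\S 2]{Ber08}. Using formula (\ref{eq:expGXiq}) we have $\exp_{G_{ji}}^{*}(\xi_{Q_{ji}})=dw+\frac{f_{q_{ji}}'(z)}{f_{q_{ji}}(z)}dz$. Since $\beta_{R_{jik}}$ lifts $\beta_{P^1_{jk}P^2_{jk}}$ on $\cE_j$ while restricting on the torus fibre to a path in $\GG_m(\CC)$ from $1$ to $l_{jik}$ (where $(l_{jik},p_{jk})$ is the chosen semi-abelian logarithm of $R_{jik}$), the integral splits as
\[
\int_{\beta_{R_{jik}}}\xi_{Q_{ji}}
=\int_{1}^{l_{jik}}dw+\int_{0}^{p_{jk}}\frac{f_{q_{ji}}'(z)}{f_{q_{ji}}(z)}dz
=l_{jik}+\log f_{q_{ji}}(p_{jk}),
\]
with $e^{l_{jik}}\in K^{*}$ because $R_{jik}\in G_{ji}(K)$ and its image in $\cE_j(K)$ is $P_{jk}$. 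Collecting all entries yields the list of periods in the statement; the constant $1$ from the $df_j$ entry and the factor $2i\pi$ from the torus entry complete the list.
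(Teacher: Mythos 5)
Your reduction via Lemma \ref{lem:decomposition}, and the computation of the elliptic block ($\omega_{ji}$, $\eta_{ji}$, $\eta_{ji}q_{ji}-\omega_{ji}\zeta_j(q_{ji})$), the torus entry ($2i\pi$ from the residue), and the entry $1$ from $df_j$ all match the paper. But there is a genuine gap in the two remaining entries, where you silently replace a path integral from $p^1_{jk}$ to $p^2_{jk}$ by one from $0$ to $p_{jk}$.

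Concretely: the path $\beta_{P^1_{jk}P^2_{jk}}$ runs from $P^1_{jk}$ to $P^2_{jk}$ on $\cE_j(\CC)$, so the naive integrals are $\int_{p^1}^{p^2}\!d\zeta_j = \zeta_j(p^2_{jk})-\zeta_j(p^1_{jk})$ and $\int_{p^1}^{p^2}\!d\log f_{q_{ji}} = \log\bigl(f_{q_{ji}}(p^2_{jk})/f_{q_{ji}}(p^1_{jk})\bigr)$, not $\zeta_j(p_{jk})$ and $\log f_{q_{ji}}(p_{jk})$. Since neither $\zeta_j$ nor $\log f_{q_{ji}}$ is additive and the integrands are not translation-invariant, you cannot simply shift the base-point of the path to $0$ (and $0$ is in fact one of the pinched points, so $P^1_{jk}=O$ is excluded). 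The paper bridges this by modifying the De Rham representatives within their cohomology class: using the pseudo-addition formula for the Weierstrass $\zeta$-function one adds an exact rational differential $dg_j$ to $\eta_j$ so that the integral becomes $\zeta_j(p_{jk})$, and using the identity (\ref{eq:fq-sigma}) together with the fact that a quotient of $\sigma$-functions is rational on $\cE_j$ one adds $d\log g_{q_{ji}}$ to $\xi_{Q_{ji}}$ so that the integral becomes $l_{jik}+\log f_{q_{ji}}(p_{jk})$. One must also check (as the paper does) that the latter change only alters $\int_{\tilde\gamma_{ji}}\xi_{Q_{ji}}$ and $\int_{\delta_{Q_{ji}}}\xi_{Q_{ji}}$ by integer multiples of $2i\pi$, which is harmless for the period field. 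Your proposal would be complete once this normalization step is added.
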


\begin{proof}
By Lemma \ref{lem:decomposition}, the 1-motives $M=[u:\ZZ^r \rightarrow G]$  and $\oplus_{i=1}^s \oplus_{k=1}^r \oplus_{j=1}^n  [u_{jik}:z_k \ZZ \rightarrow G_{ji}]$ have the same periods and therefore
 we are reduced to prove the case $r=n=s=1$. 
 
 Consider the 1-motive $M=[u: z\ZZ \to G],$ where $G$ is an extension of an elliptic curve $\cE$ by $\GG_m$ parameterized by $v^*(t)=-Q \in \cE(K)$, and $u(z)=R$ is a point of $G(K)$ living over $v(z)=P \in \cE(K).$ Let $(l,p) \in \CC^2$ be a semi-abelian logarithm of $R,$ that is 
 \[\exp_G (l,p) = R. \] 
  Let $P_2$ and $P_1$ be two $K$-rational points whose difference is $P$.
 Because of the weight filtration of $M$, we have the non-split short exact sequence 
\[
0 \longrightarrow \HH_{\dR}^1(\cE)  \longrightarrow \HH_{\dR}^1(G) \longrightarrow \HH_{\dR}^1(\GG_m) \longrightarrow 0 
\]
As $K$-basis of $\HH_{\dR}^1(G)$ we choose the differentials of the first kind $\omega $ and of the second kind $\eta$ of $\cE,$ and the differential of the third kind $\xi_Q$, which lifts the only element $\frac{dt}{t}$ of the basis of $\HH_{\dR}^1(\GG_m)$. Because of the decomposition (\ref{eq:DRham}), we complete the basis of $\HH_{\dR}(M)$ with the differential $df$ of a rational function $f$ on $\cE$ such that $f(P_2)$ differs from $f(P_1)$ by 1.
\par\noindent Always because of the weight filtration of $M$, we have the non-split short exact sequence 
\[
0 \longrightarrow \HH_1(\GG_m,\ZZ)  \longrightarrow \HH_1(G_\CC,\ZZ) \longrightarrow \HH_1(\cE_\CC,\ZZ) \longrightarrow 0 
\]
As $\QQ$-basis of $ \HH_1(G_\CC,\QQ)$ we choose two closed paths $\tilde{\gamma}_1, \tilde{\gamma}_2$ which lift the basis $\gamma_1, \gamma_2$ of $\HH_1(\cE_\CC,\QQ)$ and a closed path $\delta_{Q}$ around $-Q$.
Because of the non-split exact sequence (\ref{eq:Hodge}), we complete the basis of $\T_{\QQ}(M)$ with 
a closed path $\beta_R$, which lifts the only element $z$ of the basis of $\T_{\QQ}(z \ZZ) = \ZZ \otimes \QQ$  via the surjection $ \T_{\QQ}  ( M_{ \CC}) \rightarrow \T_{\QQ}  (z \ZZ) $, and whose restriction 
to $\HH_1(G_{ \CC},\QQ)$ is a closed path $\beta_{R|G}$ on 
$G(\CC)$ having the following properties: $\beta_{R|G}$ lifts a path 
$\beta_{P_1P_2}$ on $\cE(\CC) $ from $P_1$ to $P_2$, and its restriction to $\HH_1(\GG_m,\QQ)$ 
is a path $\beta_l$ on $\GG_m(\CC)=\CC^* $ from $1$ to $l.$
With respect to these bases of $\T_{\QQ}(M)$ and  $\HH_{\dR}(M)$,
 the matrix which represents the isomorphism (\ref{eq:betaM}) for the 1-motive $M=[u: z \ZZ \to G]$ is
\begin{equation}\label{eq:matrix-integrales}
\left( {\begin{array}{cccc}
	\int_{\beta_R} df &\int_{\beta_{P_1P_2}} \omega & \int_{\beta_{P_1P_2}} \eta &\int_{\beta_{R|G}}  \xi_Q \\
	\int_{\tilde{\gamma}_1}df &\int_{\gamma_1} \omega & \int_{\gamma_1} \eta &\int_{\tilde{\gamma}_1} \xi_Q \\
	\int_{\tilde{\gamma}_2}df &\int_{\gamma_2} \omega & \int_{\gamma_2} \eta &\int_{\tilde{\gamma}_2} \xi_Q \\
	\int_{\delta_{Q}}df &\int_{\delta_{Q}}\omega & \int_{\delta_{Q}}\eta & \int_{\delta_{Q}} \xi_Q \\
	\end{array} } \right)
\end{equation}

Recalling that $\exp_{\cE}^{*}(\omega) = dz, \exp_{\cE}^{*}(\eta) =  d \zeta(z)$, (\ref{eq:expEXiq})  and (\ref{eq:expGXiq}) we can now compute explicitly all these integrals:
\begin{itemize}
		\item $\int_{\beta_R} df= f(P_2)-f(P_1)=1,$
	\item $\int_{\tilde{\gamma}_1}df=\int_{\tilde{\gamma}_2}df = \int_{\delta_{Q}}df =0$ because of the decomposition (\ref{eq:DRham}),
	\item $\int_{\beta_{P_1P_2}} \omega= \int_{p_1}^{p_2} dz= p_2 - p_1 =p,$
	\item $\int_{\gamma_i} \omega= \int_{0}^{\omega_i} dz=    \omega_i$ for $i=1,2,$
		\item  $\int_{\delta_{Q}}\omega =
	\int_{\delta_{Q}}\eta=
	0$ since the image of $\delta_{Q}$ via  $\HH_1(G_\CC,\QQ) \to \HH_1(\cE_\CC,\QQ)$ is zero,
		\item $\int_{\gamma_i} \eta = \int_{0}^{\omega_i} d \zeta= \zeta(\omega_i) - \zeta(0) =\eta_i$ for $i=1,2,$
	\item $\int_{\beta_{P_1P_2}} \eta= 
  \int_{p_1}^{p_2} d \zeta(z) = \zeta(p_2) - \zeta(p_1). $
\end{itemize}
By the pseudo addition formula for the Weierstrass $\zeta$-function (see \cite[Example 2, p 451]{WW}), $\zeta(z+y) - \zeta(z)- \zeta(y) = \frac{1}{2} \frac{\wp'(z)-\wp'(y)}{\wp(z)-\wp(y)} \in K(\cE)$, and so it exists a rational function $g$ on $\cE$ such that $g(p_2)-g(p_1)= - \zeta(p + p_1) + \zeta(p)+ \zeta(p_1).$
Since the differential of the second kind $\eta$ lives in the quotient space $\HH^1(\cE, \mathcal{O}_\cE),$
we can add to the class of $\eta $ the exact differential $dg$, getting
\begin{itemize}
	\item $\int_{\beta_{P_1P_2}}( \eta + dg) = \int_{p_1}^{p_2}( d \zeta(z) + dg) =  \zeta(p_2) - \zeta(p_1) +g(p_2) -g(p_1) = \zeta(p),$
	\item $\int_{\beta_{R|G}}  \xi_Q= \int_0^l dw +\int_{p_1}^{p_2} \frac{f_q'(z)}{f_q(z)} dz = l+
	\int_{p_1}^{p_2} d \log f_q(z) = l + \log \frac{f_q(p_2)}{f_q(p_1)} .$
\end{itemize}
Since by \cite[20-53]{WW} the quotient of $\sigma$-functions is a rational function on $\cE$, 
from the equality (\ref{eq:fq-sigma}) it exists a rational function $g_q(z)$ on $\cE$ such that 
$ \frac{g_q(p_2)}{g_q(p_1)} = (\frac{f_q(p+p_1)}{f_q(p)f_q(p_1)})^{-1}$, and therefore we get
\begin{itemize}
	\item $\int_{\beta_{R|G}} ( \xi_Q + d \log g_q(z)) =  \int_0^l dw+ \int_{p_1}^{p_2}( d \log f_q(z) + d \log g_q(z)) = l+ \log \big( \frac{f_q(p_2)}{f_q(p_1)} 
	\frac{g_q(p_2)}{g_q(p_1)} \big) = \\
l+	\log \big( \frac{f_q(p_2)}{f_q(p_1)} \frac{f_q(p)f_q(p_1)}{f_q(p_2)} \big) = l+ \log( f_q(p) ),$ with $e^l \in K^*$,
	\item $\int_{\tilde{\gamma}_i} \xi_Q = \int_{0}^{\omega_i} \frac{f_q'(z)}{f_q(z)} dz = \int_{0}^{\omega_i} d \log f_q(z) =  \log \frac{f_q(\omega_i)}{f_q(0)} =\eta_i q - \omega_i \zeta(q) $ by (\ref{eq:periods-fq}) for $i=1,2,$
	\item $\int_{\delta_{-Q}} \xi_Q = 2i \pi \mathrm{Res}_{-Q}  \xi_Q = 2 i \pi.$
\end{itemize}
The addition of the differential $d \log g_q(z)$ to the differential of the third kind $\xi_Q$ will modify the last two integrals by an integral multiple of $2 i \pi$ (see \cite[Thm 10-7]{S}) and this is irrelevant for the computation of the field generated by the periods of $M.$

Explicitly the matrix (\ref{eq:matrix-integrales}) becomes
\begin{equation}\label{eq:matrix-periods}
\left( {\begin{array}{cccc}
	1 &p & \zeta(p) &\log f_q(p) + l \\
	0 & \omega_1 &  \eta_1 &\eta_1 q - \omega_1 \zeta(q)  \\
	0 & \omega_2 & \eta_2 &\eta_2 q - \omega_2 \zeta(q)  \\
	0 &0 & 0 & 2 i \pi\\
	\end{array} } \right),
\end{equation}
with $e^l \in K^*,$ and so the periods of the 1-motive $M=[u:z \ZZ \to G], u(z)=R,$ are $1, \omega_1,\omega_2, \eta_1, \eta_2, p, \zeta(p) , \log f_q(p) + l , \eta_1 q - \omega_1 \zeta(q), \eta_2 q - \omega_2 \zeta(q), 2 i \pi.$
\end{proof}

\begin{remark} The determinations of the complex and elliptic logarithms, 
	which appear in the first line of the matrix (\ref{eq:matrix-periods}), are not well-defined since they depend on the lifting $\beta_{P_1P_2}$ of
	 the basis of $\T_{\QQ}(z\ZZ)$ (recall that the short exact sequence (\ref{eq:Hodge}) is not split).
	Nevertheless, the field  $K (\mathrm{periodes}(M))$, which is involved in the Generalized Grothendieck's Period Conjecture, is totally independent of these choices since it contains $2i \pi$, the periods of the Weierstrass $\wp$-function, the quasi-periods of the Weierstrass $\zeta$-function, and finally the quasi-quasi-periods of Serre's function $f_q(z)$ (\ref{eq:def-fq}).
\end{remark}

 We finish this section with an example: Consider the 1-motive $M=[u:{\ZZ}^2 \to G]$, where $G$ is an extension of $\cE_1 \times \cE_2 $  by  $\GG_m^3$ parameterized by the $K$-rational points $Q_1=(Q_{11},Q_{21}), 
Q_2=(Q_{12},Q_{22}), Q_3=(Q_{13},Q_{23})$ of $\cE_1^* \times \cE_2^* $, and the morphism $u$ corresponds to two $K$-rational points $R_1,R_2$ of $G$ leaving over two points $P_1=(P_{11},P_{21}), 
P_2=(P_{12},P_{22})$ of $\cE_1 \times \cE_2. $ The more compact way to write down the matrix  which represents the isomorphism (\ref{eq:betaM}) for our  1-motive $M=[u:{\ZZ}^2 \to G]$ is to consider the 1-motive 
\[ M'= M/ [0 \longrightarrow \cE_1] \oplus  M/ [0 \longrightarrow \cE_2], \]
that is, with the above notation $M'=[u_1={\ZZ}^2 \to \Pi_{i=1}^3 G_{1i} ] \oplus [u_2={\ZZ}^2 \to \Pi_{i=1}^3 G_{2i} ] $ with $u_1$ corresponding to 
two $K$-rational points $(R_{111},R_{121},R_{131} )$ and $(R_{112},R_{122},R_{132})$  of $\Pi_{i=1}^3 G_{1i}$ living over $P_{11}$ and 
$P_{12}$, and $u_2$ corresponding to 
two $K$-rational points $(R_{211},R_{221},R_{231} )$ and $(R_{212},R_{222},R_{232})$ of $\Pi_{i=1}^3 G_{2i}$ living over $P_{21}$ and 
$P_{22}$.
  The 1-motives $M$ and $M'$ generate the same tannakian category: in fact, it is clear that 
$ <M'>^\otimes \; \;  \subset \;  \; < M>^\otimes $ and in the other hand
$M= M' / [ \ZZ^2 / \mathrm{d}_\ZZ(\ZZ) \to 0]$.
The matrix representing the isomorphism (\ref{eq:betaM}) for the 1-motive $M'$ with respect to the $K$-bases chosen in the above Corollary is

$$
\left(\begin{matrix} 
    &  &\scriptstyle{ p_{11} } &\scriptstyle{ \zeta_1(p_{11})} &    
& 0 &  0 &
\scriptstyle{\log f_{q_{11}}(p_{11})+l_{111}}&  \scriptstyle{\log f_{q_{12}}(p_{11})+l_{121}} & \scriptstyle{\log f_{q_{13}}(p_{11})+l_{131}}  \cr
   \scriptstyle{ {\rm Id}_{4 \times    4}  } &   &   \scriptstyle{ p_{12} } & \scriptstyle{ \zeta_1(p_{12})} 
&  & 0 &0 & \scriptstyle{\log f_{q_{11}}(p_{12})+l_{112}} &  \scriptstyle{\log f_{q_{12}}(p_{12})+l_{121}} &\scriptstyle{\log f_{q_{13}}(p_{12})+l_{131}}   \cr
   &    &0 &0 &    
&\scriptstyle{ p_{21} } & \scriptstyle{ \zeta_2(p_{21})} &
\scriptstyle{\log f_{q_{21}}(p_{21})+l_{211}}&  \scriptstyle{\log f_{q_{22}}(p_{21})+l_{221}} & \scriptstyle{\log f_{q_{23}}(p_{21})+l_{231}}  \cr
 &  &  0 & 0
&  & \scriptstyle{ p_{22} } &\scriptstyle{ \zeta_2(p_{22})}  & \scriptstyle{\log f_{q_{21}}(p_{22})+l_{212}} &  \scriptstyle{\log f_{q_{22}}(p_{22})+l_{222}} &\scriptstyle{\log f_{q_{23}}(p_{22})+l_{232}}   \cr
 &   & \scriptstyle{ { \omega}_{11}} &\scriptstyle{ { \eta}_{11}}& &    &  &  \scriptstyle{ { \eta}_{11} q_{11}-{ \omega}_{11} \zeta_1(q_{11})  }& 
\scriptstyle{ { \eta}_{11} q_{12}-{ \omega}_{11} \zeta_1(q_{12})  }&  \scriptstyle{ { \eta}_{11} q_{13}-{ \omega}_{11} \zeta_1(q_{13})  }\cr
    & & \scriptstyle{ { \omega}_{12}} &\scriptstyle{ { \eta}_{12}}& &   &  & \scriptstyle{ { \eta}_{12} q_{11}-{ \omega}_{12} \zeta_1(q_{11})}& 
\scriptstyle{ { \eta}_{12} q_{12}-{ \omega}_{12} \zeta_1(q_{12})}& \scriptstyle{ { \eta}_{12} q_{13}-{ \omega}_{12} \zeta_1(q_{13})} \cr
   &   & &  & &\scriptstyle{ { \omega}_{21}}  & \scriptstyle{ { \eta}_{21} } & \scriptstyle{ { \eta}_{21} q_{21}-{ \omega}_{21} \zeta_2(q_{21})}& 
\scriptstyle{ { \eta}_{21} q_{22}-{ \omega}_{21} \zeta_2(q_{22})}& \scriptstyle{ { \eta}_{21} q_{23}-{ \omega}_{21} \zeta_2(q_{23})} \cr
  &  & &  &  &\scriptstyle{ { \omega}_{22}}  & \scriptstyle{ { \eta}_{22}}&
\scriptstyle{ { \eta}_{22} q_{21}-{ \omega}_{22} \zeta_2(q_{21})}
& 
\scriptstyle{ { \eta}_{22} q_{22}-{ \omega}_{22} \zeta_2(q_{22})} &  \scriptstyle{ { \eta}_{22} q_{23}-{ \omega}_{22} \zeta_2(q_{23})} \cr
   &   &   & & &  &   & &  \scriptstyle{ 2i \pi {\rm Id}_{3 \times    3}  }&   
\end{matrix}	\right).
$$

In general, for a 1-motive of the kind $M=[u:\ZZ^r \rightarrow G]$,
where $G$ is an extension of a finite product $\Pi^n_{j=1} \cE_j $ of elliptic curves by the torus $\GG_m^s$, we will
consider the 1-motive
\[ M'= \oplus_{j=1}^n  \big( M/ [0 \longrightarrow \Pi_{1 \leq l \leq n \atop l \not =j }\cE_j] \big)  \]
whose matrix representing the isomorphism (\ref{eq:betaM}) with respect to the $K$-bases chosen in the above Corollary is
 $$
 \left(\begin{matrix} 
A&B&C \cr
0 & D&E \cr
0 &0 & F
\end{matrix}	\right)
$$
with $A=  {\rm Id}_{rn \times    rn},B$ the $rn \times 2n$ matrix involving the periods coming from the morphism $v: \ZZ^r \to \Pi^n_{j=1} \cE_j $ , $C$ the $rn \times s$  matrix involving the periods coming from the trivialization $\Psi$ of the pull-back via $(v,v^*)$ of the Poincar\'e biextension $\mathcal{P}$ of $(\Pi^n_{j=1}\cE_j, \Pi^n_{j=1}\cE_j^*)$ by $\GG_m$ , $D$ the $2n \times 2n$ matrix having in the diagonal the period matrix of each elliptic curves $\cE_j$, $ E$ the $2n \times s$ matrix involving the periods coming from the morphism $v^*: \ZZ^s \to \Pi^n_{j=1} \cE_j^* $, and finally $ F =  2i \pi{\rm Id}_{s \times  s}$ the period matrix of $\GG_m^s$.


\section{Dimension of the unipotent radical of the motivic Galois group of a 1-motive}\label{motivicGaloisgroup}

Denote by $\mathcal{MM}_{\leq 1}(K)$ the category of 1-motives defined over $K$. Using Nori's and Ayoub's works (see \cite{Ay14} and \cite{N00}), it is possible to endow the category of 1-motives with a \textit{tannakian structure with rational coefficients} (roughly speaking a tannakian category $\mathcal{T}$ with rational coefficients is an abelian category with a functor $\otimes :\mathcal{T} \times \mathcal{T} \to \mathcal{T}$ defining the tensor product of two objects  of $\mathcal{T} $, and with a fibre functor over $\mathrm{Spec}(\QQ)$ - see  \cite[2.1, 1.9, 2.8]{D90} for details). We work in a completely  geometrical setting using algebraic geometry on tannakian category and defining 	
as one goes along the objects, the morphisms and the tensor products that we will need (essentially we tensorize motives with pure motives of weight 0, and as morphisms we use projections and biextensions).

The unit object of the tannakian category $\mathcal{MM}_{\leq 1}(K)$ is the 1-motive $\ZZ(0)= [ \ZZ \to 0]$. In this section we use the notation $Y(1)$ for the torus whose cocharacter group is $Y$. In particular $\ZZ(1)= [ 0 \to \GG_m]$. If $M$ is a 1-motive, we denote by $M^\vee  \cong \uHom (M, \ZZ(0))$ its dual and by $ev_M : M \otimes M^\vee \to \ZZ(0), \delta_M:  \ZZ(0)  \to M^\vee \otimes M$ the arrows of $\mathcal{MM}_{\leq 1}(K)$ characterizing this dual. The Cartier dual of $M$ is $M^*= M^\vee \otimes \ZZ(1)$.
If $M_1,M_2$ are two 1-motives, we set
\begin{equation}\label{eq:BiextHom}
\Hom_{\mathcal{MM}_{\leq 1}(K)}(M_1 \otimes M_2, M_3):= \mathrm{Biext}^1 (M_1,M_2; M_3)
\end{equation}
where $\mathrm{Biext}^1 ((M_1,M_2;M_3)$ is the abelian group of isomorphism classes of biextensions of $(M_1,M_2)$ by $M_3$.
In particular the isomorphism class of the Poincar\'e biextension $\mathcal{P}$ of 
$(A,A^*)$ by $\GG_m$ is the Weil pairing 
$P_\mathcal{P} : A \otimes A^* \to \ZZ(1)$ of $A.$

The tannakian sub-category $<M>^\otimes$ generated by the 1-motive $M$ is the full sub-category of $\mathcal{MM}_{\leq 1}(K)$ whose objects are sub-quotients of direct sums of $M^{\otimes \; n}  \otimes M^{\vee \; \otimes \; m}$, and whose fibre functor is the restriction of the fibre functor of $\mathcal{MM}_{\leq 1}(K)$ to $<M>^\otimes$.
 Because of the tensor product of $<M>^\otimes$, we have the notion of commutative Hopf algebra in the category $\Ind <M>^\otimes$ of Ind-objects of $<M>^\otimes$, and this allows us to define the category of affine $<M>^\otimes$-group schemes, just called \textit{motivic affine group schemes}, as the opposite of the category of  
commutative Hopf algebras in $\Ind <M>^\otimes.$
The Lie algebra of a motivic affine group scheme is a pro-object $\rm L$ of $\langle M \rangle^\otimes$ endowed with a Lie algebra structure,
 i.e. $\rm L$ is endowed with an anti-symmetric application
$[\, , \,]: {\rm L} \otimes {\rm L} \to {\rm L}$
satisfying the Jacobi identity.

 The \textit{motivic Galois group} 
 $\Galmot (M)$ of $M$ is the fundamental group of the tannakian category 
 $< M >^\otimes$ generated by  $M$, i.e.
 the motivic affine group scheme ${\rm Sp}( \Lambda),$ where $ \Lambda$ is the commutative Hopf algebra  of $<M>^\otimes$ which is universal for the following property: 
for any object $X$ of $<M>^\otimes,$ it exists a morphism  
\begin{equation}\label{eq:lambdaX}
\lambda_X:  X^{\vee} \otimes X \longrightarrow   \Lambda 
\end{equation}
functorial on  $X$, i.e. such that for any morphism $f: X \to Y$ in $<M>^\otimes$ the diagram 
 \[
\begin{matrix} 
Y^{\vee} \otimes X&{\buildrel f^t \otimes 1 \over \longrightarrow}&	X^{\vee} \otimes X \cr
{\scriptstyle 1 \otimes f}\downarrow \quad \quad & & \quad \quad \downarrow
		{\scriptstyle \lambda_X}\cr
Y^{\vee} \otimes Y & {\buildrel \lambda_Y \over \longrightarrow} & \Lambda
\end{matrix}	
\]
is commutative. The universal property of $\Lambda$ is that for any object $U$ of  $<M>^\otimes$, the map
\begin{align}
\nonumber	{\Hom}(\Lambda, U) & \longrightarrow  \big\{ u_X:
		X^{\vee} \otimes X \to U, ~~ {\rm {functorial~ on~}} X \big\}  \\
\nonumber	f & \longmapsto  f  \circ \lambda_X
\end{align}
is bijective. The morphisms (\ref{eq:lambdaX}), which can be rewritten as $ X \to X \otimes \Lambda$,
define the action of the motivic Galois group
$\Galmot (M)$ on each object $X$ of $<M>^\otimes$.

If $\omega_\QQ$ is
the fibre functor Hodge realization of the tannakian category $<M>^\otimes$, 
$\omega_\QQ (\Lambda)$ is the Hopf algebra whose spectrum ${\rm Spec} (\omega (\Lambda))$ is the $\QQ$-group scheme 
$ {\underline {\rm Aut}}^{\otimes}_\QQ(\omega_\QQ)$, i.e. the Mumford-Tate group $\mathrm{MT}(M)$ of $M$. In other words, the motivic Galois group of $M$ is \textit{the geometric interpretation} of the Mumford-Tate group of $M$.
By \cite[Thm 1.2.1]{A19} these two group schemes coincides, and in particular they have the same dimension
\begin{equation}\label{dimGalMT}
\dim  \Galmot (M) = \dim \mathrm{MT}(M).
\end{equation}

Let  $M=[u:X \to G]$ be a 1-motive defined over $K$, with $G$  an extension of an abelian variety $A$ by a torus $T$.
The weight filtration $\W_{\bullet}$ of $M$ induces a filtration on its motivic Galois group $\Galmot (M)$ (\cite[Chp IV \S 2]{S72}):
\vskip 0.3 true cm
\par\noindent  $  \W_{0}(\Galmot(M))=\Galmot(M) $
\vskip 0.3 true cm
 \par\noindent $  \W_{-1}(\Galmot(M))= \big\{ g \in \Galmot(M) \, \, \vert \, \,
(g - id)M \subseteq   \W_{-1}(M)  ,(g - id) \W_{-1}(M)
\subseteq \W_{-2}(M),$
\par\noindent $ (g - id) \W_{-2}(M)=0 \big\} , $
\vskip 0.3 true cm
\par\noindent  $  \W_{-2}(\Galmot(M))=\big\{ g \in \Galmot(M) \, \, \vert \, \,
(g - id) M \subseteq \W_{-2}(M),  (g - id)  \W_{-1}(M) =0 \big\}, $
\vskip 0.3 true cm
\par\noindent $   \W_{-3}(\Galmot(M))=0.$
\vskip 0.3 true cm
\par\noindent Clearly $  \W_{-1}(\Galmot(M))$  is unipotent. Denote by $\UR (M)$ the unipotent radical of $\Galmot(M)$.

Consider the graduated 1-motive
  \[\widetilde{M}= \Gr_*^\W (M) = X+A+T \]
  associated to $M$ and let $<\widetilde{M}>^\otimes$ be the tannakian sub-category of $<M>^\otimes$ generated by $\widetilde{M}$.
  The functor ``take the graduated'' $\Gr_*^\W : <M>^\otimes \twoheadrightarrow <\widetilde{M}>^\otimes$, which is a projection,  induces the inclusion of motivic affine group schemes
  \begin{equation} \label{eq:Gr_0}
 \Galmot(\widetilde{M}) \hookrightarrow \Gr_*^\W \Galmot(M).
  \end{equation}

  \begin{lemma}\label{eq:dimGr0}
  	Let  $M=[u:X \to G]$ be a 1-motive defined over $K$, with $G$  an extension of an abelian variety $A$ by a torus $T$. The quotient ${\Gr}_{0}^{\W}(\Galmot(M))$
  	is reductive and the inclusion of motivic group schemes (\ref{eq:Gr_0})
  	identifies $\Galmot(\widetilde{M}) $ with this quotient.
  	
  	Moreover, if $X= \ZZ^r$ and $T= \GG_m^s$
  	\[
\dim {\Gr}_{0}^{\W}\big(\Galmot(M)\big)= 	\dim\Galmot (\widetilde{M}) = 
  	\begin{cases}
  	\dim \Galmot (A) & \mbox{if } A \not= 0,\\
  	1 & \mbox{if } A=0, T \not=0 ,\\
  	0 & \mbox{if } A=T=0.
  	\end{cases} 
  	\]
  \end{lemma}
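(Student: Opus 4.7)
The plan is to split the statement into three assertions: (a) the inclusion (\ref{eq:Gr_0}) is an equality, (b) the quotient ${\Gr}_0^\W \Galmot(M)$ is reductive, and (c) the dimension formula. For (a) and (b), the key remark is that the three graded pieces $X \cong M/\W_{-1}(M)$, $A \cong \W_{-1}(M)/\W_{-2}(M)$, and $T \cong \W_{-2}(M)$ are subquotients of $M$, hence objects of $\langle M \rangle^\otimes$. Therefore $\widetilde M = X+A+T$ lies in $\langle M \rangle^\otimes$, and the inclusion $\langle \widetilde M \rangle^\otimes \hookrightarrow \langle M \rangle^\otimes$ of tannakian categories induces a faithfully flat morphism of motivic affine group schemes $\Galmot(M) \twoheadrightarrow \Galmot(\widetilde M)$. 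By the very definition of $\W_{-1}(\Galmot(M))$ recalled above --- namely the elements that act trivially on each graded piece ${\Gr}_n^\W M$ --- this quotient map factors through $\Galmot(M)/\W_{-1}(\Galmot(M)) = {\Gr}_0^\W \Galmot(M)$, and the induced arrow is the inverse of (\ref{eq:Gr_0}). This establishes (a).

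For (b), the object $\widetilde M$ is a direct sum of the pure 1-motives $X$, $A$ and $T$, all of which are semi-simple (as polarizable pure Hodge structures of weights $0,-1,-2$). Consequently $\langle \widetilde M \rangle^\otimes$ is a semi-simple tannakian category and its fundamental group $\Galmot(\widetilde M)$ is reductive. Via (\ref{dimGalMT}) one can equivalently invoke the classical fact that the Mumford--Tate group of a polarizable pure Hodge structure is reductive.

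For the dimension computation (c), specialize to $X=\ZZ^r$ and $T=\GG_m^s$. Since $X$ is a sum of copies of the unit object $\ZZ(0)$, it contributes trivially: $\langle X \rangle^\otimes$ is the trivial subcategory and $\Galmot(X)=\{1\}$. Similarly $T = \ZZ(1)^s$ and $\Galmot(T) = \GG_m$ acting by Tate twist. Hence if $A=T=0$ we get $\dim \Galmot(\widetilde M) = 0$, and if $A=0$, $T\neq 0$, then $\Galmot(\widetilde M) = \Galmot(T) = \GG_m$ has dimension $1$. When $A\neq 0$, the Weil pairing $P_{\mathcal P} : A \otimes A^* \to \ZZ(1)$ exhibits $\ZZ(1)$ as a subquotient of $A \otimes A^*$, so $\langle \ZZ(1) \rangle^\otimes \subseteq \langle A \rangle^\otimes$. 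Combined with $\langle X \rangle^\otimes$ being trivial, this gives $\langle \widetilde M \rangle^\otimes = \langle A \rangle^\otimes$ and therefore $\dim {\Gr}_0^\W \Galmot(M) = \dim \Galmot(\widetilde M) = \dim \Galmot(A)$.

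The step I expect to require the most care is the equality ${\Gr}_0^\W \Galmot(M) = \Galmot(\widetilde M)$ in (a): one has the tautological inclusion (\ref{eq:Gr_0}), but to promote it to an equality one must check that the surjection $\Galmot(M) \twoheadrightarrow \Galmot(\widetilde M)$ has kernel \emph{exactly} $\W_{-1}(\Galmot(M))$, not merely a subgroup of it. This amounts to a tannakian reformulation of Saavedra's description of the weight filtration on the fundamental group, which one can verify directly by testing the action on the generators $X, A, T$ of the subcategory and using that the filtration steps $\W_{-1}$, $\W_{-2}$ were defined precisely so that they fix the appropriate graded pieces.
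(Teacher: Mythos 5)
Your proof is correct, but it takes a genuinely different route from the paper's. For part (a), the paper appeals to a motivic analogue of Brylinski's structure result \cite[\S 2.2]{By83} to describe how ${\Gr}_0^\W(\Galmot(M))$ acts on each graded piece (via $\gal$ on ${\Gr}_0^\W$, by homotheties on ${\Gr}_{-2}^\W$, with image $\Galmot(A)$ on ${\Gr}_{-1}^\W$), and deduces both the identification with $\Galmot(\widetilde M)$ and reductivity simultaneously from that description. You instead prove the identification directly by computing the kernel of the surjection $\Galmot(M) \twoheadrightarrow \Galmot(\widetilde M)$ from the explicit definition of $\W_{-1}(\Galmot(M))$ --- which the paper in fact acknowledges is possible, but only \emph{after} the Lemma, in the discussion following Corollary~\ref{eq:DecomDim} (so you have reversed the paper's logical order). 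You then handle reductivity separately via semi-simplicity of $\widetilde M$ as a direct sum of polarizable pure objects, a self-contained tannakian argument rather than a citation. Your dimension computation is actually \emph{more} careful than the paper's: the paper only observes $\dim \Galmot(\GG_m)=1$ and $\dim \Galmot(\ZZ)=0$ and leaves implicit why $\langle \widetilde M \rangle^\otimes = \langle A \rangle^\otimes$ when $A \neq 0$, whereas you supply the missing step by invoking the Weil pairing to put $\ZZ(1)$ inside $\langle A \rangle^\otimes$. One small imprecision worth tightening: when you write that $\widetilde M$ semi-simple forces $\langle \widetilde M \rangle^\otimes$ to be semi-simple, the cleaner and non-circular formulation is that $\Galmot(\widetilde M)$ has a \emph{faithful} semi-simple representation on $\omega(\widetilde M)$ and is therefore reductive in characteristic zero; the semi-simplicity of the whole subcategory is then a consequence, not an intermediate step.
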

  
  \begin{proof}
  By a motivic analogue of \cite[\S 2.2]{By83}, 
  ${\Gr}_{0}^{\W}(\Galmot(M))$ acts via $\gal$ on ${\Gr}_{0}^{W}(M)$,
  by homotheties on ${\Gr}_{-2}^{\W}(M)$, and
  its image in the group of authomorphisms of
  ${\Gr}_{-1}^{\W}(M)$
  is the motivic Galois group $\Galmot(A)$ of the abelian variety 
  $A $ underlying $M$.
  Therefore ${\Gr}_{0}^{\W}(\Galmot(M))$
  is reductive, and via the inclusion (\ref{eq:Gr_0})
  it coincides with $\Galmot(\widetilde{M}) .$
  To conclude, observe that 
 	$\Lie \, \Galmot(\GG_m)= \GG_m$ which has dimension 1, and $\Galmot(\ZZ)= \mathrm{Sp}(\ZZ(0))$ which has dimension 0.
  \end{proof}

The inclusion $<\widetilde{M}>^\otimes \hookrightarrow <M>^\otimes $ of tannakian categories induces the following surjection of motivic affine group schemes
\begin{equation} \label{eq:RestrictionGr_0}
\Galmot (M) \twoheadrightarrow  \Galmot (\widetilde{M})
\end{equation}
 which is the restriction $g \mapsto 	g_{|\widetilde{M} }.$ As an immediate consequence of the above Lemma we have

\begin{corollary}\label{eq:DecomDim}
	Let  $M=[u:X \to G]$ be a 1-motive defined over $K$. Then
\[\W_{-1}(\Galmot(M))=\ker \big[\Galmot (M) \twoheadrightarrow  \Galmot (\widetilde{M}) \big]. \]
In particular, $\W_{-1}(\Galmot(M))$ is the unipotent radical $ \UR (M)$ of 
$\Galmot(M)$ and 
	\[
	\dim \Galmot (M) = \dim \Galmot (\widetilde{M}) + \dim \UR (M).
	\]
\end{corollary}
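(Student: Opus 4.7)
The plan is to prove the three assertions in sequence, starting with the identification of the kernel.

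First, I would unwind what it means to lie in the kernel of the restriction map $\Galmot(M) \twoheadrightarrow \Galmot(\widetilde{M})$. Since $\langle \widetilde{M} \rangle^\otimes$ is the tannakian subcategory of $\langle M \rangle^\otimes$ generated by the single object $\widetilde{M} = \Gr_*^\W(M) = X + A + T$, an element $g \in \Galmot(M)$ is in the kernel if and only if its restriction to $\widetilde{M}$ is the identity, i.e.\ $g$ acts trivially on each of $X = \Gr_0^\W(M)$, $A = \Gr_{-1}^\W(M)$, and $T = \Gr_{-2}^\W(M)$. Reading off the definition of $\W_{-1}(\Galmot(M))$ recalled just above, this is exactly the condition that $(g-\mathrm{id})\W_i(M) \subseteq \W_{i-1}(M)$ for all $i$, i.e.\ $g \in \W_{-1}(\Galmot(M))$. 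This gives the first equality.

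Second, I would show that this kernel is the unipotent radical $\UR(M)$. Since the weight filtration on $M$ has length three, any $g \in \W_{-1}(\Galmot(M))$ satisfies $(g-\mathrm{id})^3 = 0$ on $M$, so $\W_{-1}(\Galmot(M))$ is unipotent (as already noted in the excerpt). Moreover, by Lemma \ref{eq:dimGr0} the quotient $\Galmot(M)/\W_{-1}(\Galmot(M)) \cong \Galmot(\widetilde{M})$ is reductive. Now any unipotent normal subgroup $U \subseteq \Galmot(M)$ must have unipotent image in the reductive group $\Galmot(\widetilde{M})$, hence trivial image, so $U \subseteq \W_{-1}(\Galmot(M))$. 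This characterizes $\W_{-1}(\Galmot(M))$ as the maximal unipotent normal subgroup, that is, as $\UR(M)$.

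Third, the dimension formula is a direct consequence of the short exact sequence of motivic affine group schemes
\[
1 \longrightarrow \W_{-1}(\Galmot(M)) \longrightarrow \Galmot(M) \longrightarrow \Galmot(\widetilde{M}) \longrightarrow 1
\]
provided by the previous two steps, combined with the identification $\W_{-1}(\Galmot(M)) = \UR(M)$.

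The only delicate point is the first step, where one must verify in the tannakian setting that acting trivially on the three generators $X$, $A$, $T$ of $\langle \widetilde{M}\rangle^\otimes$ really is equivalent to acting trivially on the whole subcategory; this is standard, since tensor and subquotient constructions are preserved by any tensor automorphism that fixes the generators. Everything else reduces to the definition of the weight filtration on $\Galmot(M)$ and to Lemma \ref{eq:dimGr0}.
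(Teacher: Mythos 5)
Your proof is correct and follows essentially the same route as the paper: the first equality is obtained by unwinding the definition of the weight filtration on $\Galmot(M)$ and matching it against triviality on $\Gr_*^\W(M)$, exactly as the paper's subsequent observation does, and the identification with $\UR(M)$ uses the reductivity of $\Galmot(\widetilde M)$ from Lemma~\ref{eq:dimGr0} combined with the already-noted unipotence of $\W_{-1}(\Galmot(M))$. You spell out slightly more explicitly why $\W_{-1}(\Galmot(M))$ is the \emph{full} unipotent radical (any normal unipotent subgroup must die in the reductive quotient) and flag the tannakian point that triviality on the generator $\widetilde M$ propagates to all of $\langle \widetilde M\rangle^\otimes$; the paper treats both as immediate.
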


Observe that we can prove the equality $ \W_{-1}(\Galmot(M))=\ker \big[\Galmot (M) \twoheadrightarrow  \Galmot (\widetilde{M}) \big]$ directly using the definition of the weight filtration:
	 \[
	\begin{aligned}
	g \in  \W_{-1}(\Galmot(M))  & \Longleftrightarrow (g - id) {\Gr}_{0}^{\W}(M) =0, (g - id) {\Gr}_{-1}^{\W}(M) =0,(g - id) {\Gr}_{-2}^{\W}(M) =0
	\\
	& \Longleftrightarrow 	g_{| {\Gr}_{*}^{\W}(M) } = \id, \;  \; \mathrm{i.e.} \;  \;  g= \id \;\;   \mathrm{in}\; \; \Galmot(\widetilde{M}).
	\end{aligned}\]

The inclusion $<M + M^\vee /\W_{-2} (M + M^\vee) >^\otimes \hookrightarrow <M>^\otimes $ of tannakian categories induces the following surjection of motivic affine group schemes
\begin{equation} \label{eq:Gr_1}
\Galmot (M) \twoheadrightarrow  \Galmot \big(M + M^\vee /\W_{-2} (M + M^\vee)\big)
\end{equation}
 which is the restriction $g \mapsto 	g_{|M + M^\vee /\W_{-2} (M + M^\vee) }.$

\begin{lemma}\label{eq:DecomRU}
	Let  $M=[u:X \to G]$ be a 1-motive defined over $K$. Then
	\[\W_{-2}(\Galmot(M))=\ker \big[\Galmot (M) \twoheadrightarrow   \Galmot (M + M^\vee /\W_{-2} (M + M^\vee)) \big]. \]
	In particular, the quotient ${\Gr}_{-1}^{\W}(\Galmot(M))$ of the unipotent radical $ \UR (M)$  is  the unipotent radical $\W_{-1} \big( \Galmot (M + M^\vee /\W_{-2} (M + M^\vee))\big)$  of 
	$\Galmot\big( M + M^\vee /\W_{-2} (M + M^\vee)\big)$. 
\end{lemma}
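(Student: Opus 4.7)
My plan is to mirror the direct, definitional argument sketched as a remark after Corollary~\ref{eq:DecomDim}, rather than invoke new machinery. By the explicit description of the weight filtration on $\Galmot(M)$ recalled at the start of this section, an element $g \in \Galmot(M)$ belongs to $\W_{-2}\Galmot(M)$ precisely when $(g-\id)M \subseteq \W_{-2}M$ and $(g-\id)\W_{-1}M = 0$, i.e., when $g$ acts as the identity both on the quotient $M/\W_{-2}M = M/T$ and on the sub-1-motive $\W_{-1}M = G$. The first claim thus reduces to matching this pair of triviality conditions with the single condition $g \mapsto \id$ in $\Galmot(N)$, where $N := (M+M^\vee)/\W_{-2}(M+M^\vee)$.

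Using additivity of the weight filtration on direct sums, I would write $N \cong (M/\W_{-2}M) \oplus (M^\vee/\W_{-2}M^\vee)$. Since $\Galmot(M)$ acts on each direct summand separately, an element lies in the kernel of the restriction $\Galmot(M) \twoheadrightarrow \Galmot(N)$ if and only if it acts trivially on each. The first summand immediately yields the condition ``$g$ trivial on $M/T$''.

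The core step is to show that $g$ acts trivially on $M^\vee/\W_{-2}M^\vee$ if and only if $g$ acts trivially on $G=\W_{-1}M$. I would use the tannakian duality between sub- and quotient-objects of $M$ and $M^\vee$ furnished by $ev_M \colon M \otimes M^\vee \to \ZZ(0)$: passing to annihilators identifies $\W_{-1}M = G$ with precisely the quotient $M^\vee/\W_{-2}M^\vee$, so the functoriality of the $\Galmot(M)$-action on duals transports triviality from one side to the other. The subtlety, and the main obstacle of the proof, is a Tate twist entering via the Weil pairing $A \otimes A^* \to \ZZ(1)$ that realises the duality between the abelian parts; this pairing lies in $<N>^\otimes$, and since $g$ in the kernel acts trivially on all graded pieces appearing in $N$, it must also act trivially on $\ZZ(1)$, which renders the twist harmless and makes the two triviality conditions match up. Combining this with the previous paragraph proves the main equality.

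For the ``in particular'' statement: by Corollary~\ref{eq:DecomDim} we already know $\W_{-1}\Galmot(M) = \UR(M)$. Factoring the surjection $\Galmot(M) \twoheadrightarrow \Galmot(N)$ through the quotient by the kernel $\W_{-2}\Galmot(M)$ just established identifies ${\Gr}_{-1}^{\W}\Galmot(M) = \W_{-1}\Galmot(M)/\W_{-2}\Galmot(M)$ with the image of $\UR(M)$ inside $\Galmot(N)$. In characteristic zero the image of a unipotent normal subgroup under a surjection of linear algebraic groups is the unipotent radical of the target (since the target's reductive quotient is realised by the image of the source's reductive quotient), so this image coincides with $\W_{-1}\Galmot(N)$, as claimed.
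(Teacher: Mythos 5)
Your proof is correct and follows the paper's strategy: unwind the definition of $\W_{-2}\Galmot(M)$, translate the condition $(g-\id)\W_{-1}(M)=0$ into triviality of $g$ on the dual quotient $M^\vee/\W_{-2}M^\vee$, and deduce the second statement from the compatibility of the surjection (\ref{eq:Gr_1}) with the weight filtration. The paper simply asserts the equivalence $(g-\id)\W_{-1}(M)=0 \Leftrightarrow g|_{M^\vee/\W_{-2}(M^\vee)} = \id$ and that (\ref{eq:Gr_1}) respects weights, whereas you correctly note that $M^\vee/\W_{-2}M^\vee$ is really the (Cartier-)dual of $G=\W_{-1}M$ twisted by $\ZZ(1)$ and that the Weil pairing places $\ZZ(1)$ inside $\langle N\rangle^\otimes$, rendering the twist harmless; and you replace the weight-compatibility remark by the equivalent observation that a surjection of algebraic groups carries the unipotent radical onto the unipotent radical.
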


\begin{proof} Using the definition of the weight filtration, we have:
\[
\begin{aligned}
g \in  \W_{-2}(\Galmot(M))  & \Longleftrightarrow (g - id) M/\W_{-2}(M)  =0,\; (g - id)\W_{-1}(M)  =0
\\
& \Longleftrightarrow 	g_{|  M/\W_{-2}(M) } = \id, \;	g_{|  M^\vee/\W_{-2}(M^\vee) } = \id \\
& \Longleftrightarrow  g= \id \; \;  \mathrm{in}\;\; \Galmot (M + M^\vee /\W_{-2} (M + M^\vee)).
\end{aligned}\]
Since the  surjection of motivic affine group schemes (\ref{eq:Gr_1}) respects the weight filtration,  $\W_{-2}(\Galmot(M))$ is in fact the kernel of $\W_{-1} (\Galmot (M)) \twoheadrightarrow  \W_{-1} (\Galmot (M + M^\vee /\W_{-2} (M + M^\vee))) .$ Hence we get the second statement.
\end{proof}

\par\noindent From the definition of weight filtration, we observe that
\[ \W_{-2}(\Galmot(M)) \subseteq \uHom (X,Y(1)) \cong X^\vee \otimes Y (1). \]
By the above Lemma, we have that
\[{\Gr}_{-1}^{\W}(\Galmot(M)) \subseteq \uHom (X+ Y^\vee,A+A^*) \cong X^\vee  \otimes A+Y \otimes A^*. \]

 In order to compute the dimension of the unipotent radical $\UR (M) $ of $\Galmot (M )$ we use notations of \cite[\S 3]{B03} that we recall briefly.
Let  $(X,Y^\vee, A,A^*, v:X \to A, 
v^*:Y^\vee \to A^*, \psi:X \otimes Y^\vee \to
(v \times v^*)^* \mathcal{P})$ be the 7-tuple defining the 1-motive 
$M=[u:X \to G]$ over $K$, where $G$ an extension of $A$ by the torus $Y(1)$.
Let 
\[E=\W_{-1}( {\underline {\End}}(\widetilde{M})).\]
It is the direct sum of the pure motives $E_{-1}= X^\vee \otimes A + A^\vee \otimes Y(1)$ and $E_{-2}= X^\vee \otimes Y(1)$ of weight -1 and -2. As observed in \cite[\S 3]{B03}, the composition of endomorphisms 
furnishes a ring structure to $E$ given by the arrow  $P: E \otimes E \to E$ of $\langle M \rangle^\otimes$ whose only non trivial component 
is 
$$E_{-1} \otimes E_{-1} \longrightarrow (X^\vee \otimes A) \otimes (A^* \otimes Y) 
\longrightarrow {\ZZ}(1) \otimes X^\vee \otimes Y = E_{-2},$$
where the first arrow is the projection from  $ E_{-1} \otimes E_{-1}$ to $(X^\vee \otimes A) \otimes (A^* \otimes Y) $ and the second arrow is the Weil pairing $P_{\mathcal{P}}:
A \otimes A^* \to \ZZ(1)$ of $A.$

Because of the definition (\ref{eq:BiextHom}) the product
$P:E_{-1} \otimes E_{-1} \to E_{-2}$
defines a biextension $\mathcal{B}$ of $(E_{-1},E_{-1})$ by 
$E_{-2} $, whose pull-back $ d^* \mathcal{B}$ via the diagonal morphism $d:  E_{-1} \to E_{-1} \times E_{-1}$ is a $\Sigma - X^\vee \otimes Y  (1)$-torsor over $E_{-1}$. By \cite[Lem 3.3]{B03} this $\Sigma - X^\vee \otimes Y  (1)$-torsor $ d^* \mathcal{B}$ induces a Lie bracket 
$[\, ,\,]: E \otimes E \to E$ on $E$ which becomes therefore a Lie algebra.

The action of $E=W_{-1}( {\underline {\End}}(\widetilde{M}))$ on $\widetilde{M}$ is given by the arrow 
$ E \otimes \widetilde{M} \to \widetilde{M}$ of $\langle M \rangle^\otimes$
 whose only non trivial components are
\begin{align}
\label{eq:alpha1}	\alpha_1:& (X^\vee \otimes A) \otimes X  \longrightarrow A \\
\nonumber	\alpha_2:& (A^* \otimes Y) \otimes A  \longrightarrow Y(1) \\
\nonumber 	\gamma :& (X^\vee \otimes Y(1)) \otimes X  \longrightarrow Y(1),
\end{align}
where the first and the last arrows are induced by 
$ev_{X^\vee}: X^\vee \otimes X \to \ZZ(0)$, 
while the second one is $\rk (Y)$-copies of the Weil pairing $P_{\mathcal{P}}:
A \otimes A^* \to \ZZ(1)$ of $A$. 
By \cite[Lem 3.3]{B03},
via the arrow $(\alpha_1, \alpha_2, \gamma ):  E \otimes \widetilde{M} \to \widetilde{M}$, the 1-motive $\widetilde{M}$ is in fact a  $(E,[,])$-Lie module. 

As observed in \cite[Rem 3.4 (3)]{B03} $E$ acts also on the Cartier dual $\widetilde{M}^*= Y^\vee + A^* + X^\vee(1)$ of $\widetilde{M}$ and this action is given by the arrows
\begin{align}
\label{eq:alpha2*}		\alpha_2^*:& (A^* \otimes Y) \otimes Y^\vee \longrightarrow A^* \\
\nonumber\alpha_1^*:& (X^\vee \otimes A) \otimes A^*  \longrightarrow X^\vee(1) \\
\nonumber \gamma^* :& (X^\vee \otimes Y(1)) \otimes Y^\vee \longrightarrow X^\vee(1),
\end{align}
where $\alpha_2^*$ et $\gamma^*$ are projections, while $\alpha_1^*$ 
is $\rk (X^\vee)$-copies of the Weil pairing $P_{\mathcal{P}}:
A \otimes A^* \to \ZZ(1)$ of $A$.

Via the arrows
$\delta_{ X^\vee}: \ZZ(0) \to X \otimes X^\vee$ et
$\delta_{ Y}: \ZZ(0) \to Y^\vee \otimes Y$, to have the morphisms 
$v: X \to A$ and $v^*: Y^\vee \to A^*$ underlying the 1-motive $M$ is the same thing as to have the morphisms 
$V: \ZZ(0) \to A \otimes X^\vee $ and 
$V^*: \ZZ(0) \to A^* \otimes Y,$ i.e. to have a  point 
\[b=(b_1,b_2) \in E_{-1}(K)= A \otimes X^\vee(K)+A^* \otimes Y(K). \]

Fix now an element $(x,y^\vee)$ in the character group $X \otimes  Y^\vee$ of the torus $X^\vee \otimes Y(1)$.
By construction of the point $b$, it exists an element $(s,t) \in X \otimes  Y^\vee (K)$ such that
\begin{align}
\nonumber v(x) &= \alpha_1(b_1,s) \in A(K)	 \\
\nonumber v^*(y^\vee)& =\alpha_2^*(b_2,t)\in A^*(K).
\end{align}
Let $i^*_{x,y^\vee} d^*\mathcal{B}$ be the pull-back of $d^* \mathcal{B}$
via the inclusion $i_{x,y^\vee}: \{ (v(x),v^*(y^\vee) )\}  
\hookrightarrow E_{-1}$ in $E_{-1}$ of the abelian sub-variety generated by the point 
$ (v(x),v^*(y^\vee) )$.
The push-down 
$(x,y^\vee)_*i^*_{x,y^\vee} d^*\mathcal{B}$ 
of $i^*_{x,y^\vee} d^*\mathcal{B}$ via the character
$(x,y^\vee):X^\vee \otimes Y(1)\to \ZZ(1)$ is a $\Sigma-\ZZ(1)$-torsor 
over $ \{ (v(x),v^*(y^\vee))\}:  $
\[\begin{matrix} 
	(x,y^\vee)_*i^*_{x,y^\vee} d^*\mathcal{B}&  \longleftarrow & i^*_{x,y^\vee} d^*\mathcal{B} &  \longrightarrow & d^* \mathcal{B}  \\
	\downarrow & &\downarrow & & \downarrow  \\
	\{ (v(x),v^*(y^\vee)) \}    & =  &
	\{ (v(x),v^*(y^\vee)) \}    &  
	{\buildrel i_{x,y^\vee} \over \longrightarrow}   & E_{-1} 
\end{matrix} \]
To have the point $\psi(x,y^\vee)$ is equivalent to have a point
 $(\widetilde{b})_{x,y^\vee}$ of
 $(x,y^\vee)_*i^*_{x,y^\vee} d^*\mathcal{B}$ over $ (v(x),v^*(y^\vee))$, and so to have the trivialization
$\psi$ is equivalent to have a point 
\[\widetilde{b} \in (d^*\mathcal{B})_{b}\]
 in the fibre of $d^*\mathcal{B}$ over $b=(b_1,b_2).$ 

Consider now the following pure motives:
\begin{enumerate}
	\item Let $B$ be the \textit{smallest} abelian sub-variety (modulo isogenies)
	of  $X^\vee  \otimes A+A^* \otimes Y$ which contains the point 
	$b=(b_1,b_2) \in X^\vee  \otimes A (K) +
	A^* \otimes Y (K) $. The pull-back
	 $i^*d^* \mathcal{B}$ of $d^* \mathcal{B}$ via the inclusion
	 $i: B \hookrightarrow E_{-1}$ of $B$ 
	on $E_{-1}$, is a $\Sigma-X^\vee \otimes Y(1)$-torsor
	over $B$.
	\item Let $Z_1$ be the \textit{smallest} $\gal$-sub-module of
	$X^\vee \otimes Y$  such that the torus $Z_1(1)$ contains the image of the Lie bracket $[\, ,\,]: B \otimes B \to X^\vee \otimes Y(1)$.
	The push-down $p_*i^*d^* \mathcal{B}$ of the $\Sigma-X^\vee \otimes Y(1)$-torsor
	$i^*d^* \mathcal{B}$ via the projection
	$p:X^\vee \otimes Y(1) \twoheadrightarrow
	(X^\vee \otimes Y/ Z_1)(1)$ is the trivial $\Sigma-(X^\vee \otimes Y/ Z_1)(1)$-torsor over $B$, 
	i.e. 
	\[p_*i^*d^* \mathcal{B}= B \times (X^\vee \otimes Y/ Z_1)(1).\] 
	Note by $\pi: p_*i^*d^* \mathcal{B} \twoheadrightarrow (X^\vee \otimes Y/ Z_1)(1)$
	the canonical projection. We still note $\widetilde{b}$ the points of $i^*d^* \mathcal{B}$
	and of $p_*i^*d^* \mathcal{B}$ living over $b \in B$.
	\item Let $Z$ be the \textit{smallest} $\gal$-sub-module of
	$X^\vee \otimes Y$ containing
 $Z_1$ and such that
	the sub-torus $(Z/ Z_1)(1)$ of $(X^\vee \otimes Y/ Z_1)(1)$ contains
	$\pi (\widetilde{b}) $. 
\end{enumerate}

Let $A_\CC$ be the abelian variety defined over $\CC$ obtained from $A$ extending the scalars from $K$ to the complexes. Denote by $g$ the dimension of $A$.
Consider the abelian exponential
\[	\exp_{A}: \Lie A_\CC \longrightarrow  A_\CC \]
whose kernel is the lattice $\HH_1(A_\CC(\CC),\ZZ),$ and denote by $\log_A$ an abelian logarithm of $A$, that is a choice of an inverse map of $\exp_{A}$. Consider the composite
\[ P_\mathcal{P} \circ (v \times v^*): X \otimes Y^\vee \longrightarrow \ZZ(1)\]
where $P_\mathcal{P}: A \otimes A^* \to \ZZ(1)$ is the Weil pairing of $A.$
Since we work modulo isogenies, we identify the abelian variety $A$ with its Cartier dual $A^*$.
Let $\omega_1, \dots , \omega_g $ be differentials of the first kind which build a basis of 
the $K$-vector space $\HH^0(A, \Omega^1_A)$ of holomorphic differentials, and let $\eta_1, \dots , \eta_g $ be differentials of the second kind which build a basis of 
the $K$-vector space $\HH^1(A, \mathcal{O}_A)$ of differentials of the second kind modulo holomorphic differentials and exact differentials. As in the case of elliptic curves, the first De Rham cohomology group $\HH^1_\dR(A)$ of the abelian variety $A$ is the direct sum $\HH^0(A, \Omega^1_A) \oplus \HH^1(A, \mathcal{O}_A)$ of these two vector spaces and it has dimension $2g$.
Let $\gamma_1, \dots, \gamma_{2g}$  be closed paths which build a basis of the $\QQ$-vector space $\HH_1(A_\CC,\QQ)$. For $n=1, \dots, g$ and $m=1,\dots, 2g$, the abelian integrals of the first kind
 $ \int_{\gamma_m} \omega_n = \omega_{nm}$  are the \textit{periods} of the abelian variety $A$, and  the abelian integrals of the second kind $ \int_{\gamma_m} \eta_n = \eta_{nm}$ are the \textit{quasi-periods} of $A$.

\begin{theorem}\label{eq:dimUR}
	Let $M=[u:X \to G]$ be a 1-motive defined over $K$, with $G$  an extension of an abelain variety $A$ by a torus $Y(1)$. Denote by 
	$ F=\End( A) \otimes_\ZZ \QQ$ the field of endomorphisms of the abelian variety $A.$ 
	Let $x_1, \dots, x_{\mathrm{rk}(X)}$ be generators of the character group $X$ and let $y^\vee_1, \dots , y^\vee_{\mathrm{rk}(Y^\vee)}$ be generators of the character group $Y^\vee.$
	Then
	\[
 \dim_{\QQ} \UR( M)=\]
\[   2 \dim_{F} \mathcal{A}b \mathcal{L}og \; \im (v,v^*) +
  \dim_{\QQ}  \mathcal{L}og  \;\im (P_\mathcal{P} \circ (v \times v^*)) + 
\dim_{\QQ}  \mathcal{L}og \; \im (\psi_{| \ker (P_\mathcal{P} \circ (v \times v^*))})
	\]
	where 
	\begin{itemize}
		\item 	$ \mathcal{A}b\mathcal{L}og \; \im(v,v^*)$ is the $F$-sub-vector space of $\CC /  (\sum_{n=1, \dots, g \atop m=1, \dots, 2g}F \, \omega_{nm})
		$  generated by	the abelian logarithms 
		$\{ \log_A v(x_k), \log_A v^*(y^\vee_i) \}_{ k=1,  \ldots, \mathrm{rk}(X) \atop  i=1,  \ldots, \mathrm{rk}(Y^\vee)}$ ;
		\item  $ \mathcal{L}og  \; \im ( P_\mathcal{P} \circ (v \times v^*))$ is the $\QQ\,$-sub-vector space of $\CC / 2 i \pi \QQ$	generated by the logarithms 
		\par\noindent  $\{ \log P_\mathcal{P}(v(x_k),v^*(y^\vee_i) ) \}_{ k=1,  \ldots, \mathrm{rk}(X) \atop  i=1,  \ldots, \mathrm{rk}(Y^\vee)}$;  
		\item  $  \mathcal{L}og  \; \im (\psi_{| \ker (P_\mathcal{P} \circ (v \times v^*))})$ is the $\QQ\,$-sub-vector space of $\CC / 2 i \pi \QQ$ generated by the logarithms
	 $\{ \log \psi(x_{k'},y^\vee_{i'} ) \}_{(x_{k'},y^\vee_{i'} ) \in \ker (P_\mathcal{P} \circ (v \times v^*))  \atop 
	 1\leq {k'} \leq \mathrm{rk}(X), \;  1\leq {i'} \leq 	\mathrm{rk}(Y^\vee)	} .$
	\end{itemize}

\end{theorem}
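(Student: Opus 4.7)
The plan is to split $\UR(M)$ into its two weight-graded pieces, identify each with the motivic object ($B$ in weight $-1$, respectively $Z$ in weight $-2$) built from the $7$-tuple defining $M$, and then convert the dimensions of these objects into the transcendence-theoretic quantities appearing in the statement. Combining Corollary \ref{eq:DecomDim} with Lemma \ref{eq:DecomRU} gives
\[
\dim_\QQ \UR(M) \;=\; \dim_\QQ \Gr_{-1}^\W(\Galmot(M)) + \dim_\QQ \W_{-2}(\Galmot(M)),
\]
so it suffices to identify the first summand with $\dim_\QQ B$ and the second with $\dim_\QQ Z$.

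For the weight $-1$ piece, Lemma \ref{eq:DecomRU} identifies $\Gr_{-1}^\W(\Galmot(M))$ with the unipotent radical of $\Galmot(M+M^\vee/\W_{-2}(M+M^\vee))$, whose underlying semi-abelian variety has trivial toric part; this group therefore lives inside $E_{-1}$. Since $v$ and $v^*$ are encoded in the point $b \in E_{-1}(K)$, I would use the tannakian analysis of \cite{B03} to identify this group with the smallest abelian sub-variety $B$ of $E_{-1}$ containing $b$. Invoking the equality $\dim\Galmot = \dim\MT$ of \cite[Thm 1.2.1]{A19} together with the standard description of the Mumford--Tate group of an abelian variety via its endomorphism algebra acting on abelian logarithms, one obtains $\dim_\QQ B = 2\dim_F \mathcal{A}b\mathcal{L}og\,\im(v,v^*)$, the factor $2$ reflecting the fact that an abelian variety has real dimension twice its complex dimension.

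For the weight $-2$ piece, $\W_{-2}(\Galmot(M))$ is a sub-torus of $E_{-2}=X^\vee\otimes Y(1)$, and the equivalence of categories between tori and $\gal$-modules realises it as $Z(1)$ for the $\gal$-sub-module $Z \subseteq X^\vee \otimes Y$ constructed in \cite[\S 3]{B03}. The short exact sequence $0\to Z_1 \to Z \to Z/Z_1\to 0$ splits the dimension of $Z$ into two contributions. By construction, $Z_1$ is the smallest $\gal$-sub-module whose associated torus $Z_1(1)$ contains the image of the Lie bracket $[\,,\,]\colon B\otimes B \to E_{-2}$; unravelling this bracket via the Weil pairing shows that the relevant complex points of $Z_1(1)$ are generated over $\QQ$ by the values $P_\mathcal{P}(v(x_k),v^*(y_i^\vee))$, so that $\dim_\QQ Z_1 = \dim_\QQ \mathcal{L}og\,\im(P_\mathcal{P}\circ(v\times v^*))$. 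Dually, $Z/Z_1$ is the smallest sub-module such that $(Z/Z_1)(1)$ contains $\pi(\widetilde b)$; since the projection $\pi$ has already killed the Weil pairing contributions, the residual non-trivial data of $\widetilde b$ are exactly the values $\psi(x_{k'},y_{i'}^\vee)$ with $(x_{k'},y_{i'}^\vee)\in \ker(P_\mathcal{P}\circ(v\times v^*))$, yielding the third summand.

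The hard part will be the precise identification of the weight-graded pieces of $\Galmot(M)$ with the motivic objects $B$ and $Z$. This requires translating the tannakian definition of $\Galmot(M)$ through the universal morphisms $\lambda_X$ of (\ref{eq:lambdaX}) into concrete statements showing that the action of $\Galmot(M)$ on the 7-tuple of $M$ is governed exactly by the point $b$ and by the torsor point $\widetilde b \in (d^*\mathcal{B})_b$. Once this dictionary is in place, the final dimension counts -- for $B$ via Mumford--Tate theory and for $Z$ via direct inspection of the torus $X^\vee\otimes Y(1)$ -- become routine, and summing the three contributions yields the claimed formula.
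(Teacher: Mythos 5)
Your overall architecture — splitting $\UR(M)$ into the weight-graded pieces $\Gr_{-1}^{\W}$ and $\W_{-2}$, identifying these with the abelian subvariety $B$ and the torus $Z(1)$ built from the $7$-tuple of $M$, and then converting dimensions into the logarithmic quantities — is the same as the paper's. But you stop short of the one step that carries all the content. The paper's proof hinges on a single citation: \cite[Thm~0.1]{B03}, which asserts directly that $\W_{-1}(\Lie\Galmot(M))$ is the semi-abelian variety extension of $B$ by $Z(1)$ determined by the adjoint action of the Lie algebra $(B,Z(1),[\,,\,])$. You instead sketch a plan to re-derive this from the universal morphisms $\lambda_X$ of (\ref{eq:lambdaX}) applied to the $7$-tuple, and then explicitly concede in your last paragraph that this is ``the hard part'' and is not carried out. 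That is precisely the place where the paper invokes the prior theorem; as written, your argument has a hole exactly there.

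Two subsidiary issues. First, in the weight $-1$ count you justify $\dim_{\QQ}B = 2\dim_F \mathcal{A}b\mathcal{L}og\,\im(v,v^*)$ by appealing to $\dim\Galmot=\dim\MT$ from \cite[Thm~1.2.1]{A19} ``together with the standard description of the Mumford--Tate group of an abelian variety''; but the quantity $\dim B$ is the dimension of the smallest abelian subvariety (mod isogeny) of $E_{-1}$ through the point $b$, not the dimension of a Galois or Mumford--Tate group, so the result of \cite{A19} is simply not what is needed here. What is actually used is that, modulo isogeny, $F$-linear relations among abelian logarithms of the components of $b$ govern which abelian subvariety $b$ generates — a statement about $\End$-module structure, not about tannakian groups. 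The factor $2$ is fine; it comes from the tannakian category having rational coefficients (the $\QQ$-dimension of the realization of $B$ is $2\dim B$), which is essentially what you say, just phrased as real-vs-complex dimension. Second, for the weight $-2$ count your sketch matches the paper (the Lie bracket yields $Z_1$ via the Weil pairing, the torsor point $\widetilde b$ yields $Z/Z_1$), and here as well the explicit description \cite[(2.8.4)]{B03} of the bracket is what underwrites the identification; you allude to it but again leave the verification implicit.
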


\begin{proof}  By the main theorem of \cite[Thm 0.1]{B03}, the unipotent
radical $W_{-1} (\Lie {\Galmot}(M))$ is the semi-abelian variety extension of $B$ by $Z(1)$ defined by the adjoint action of the Lie algebra $(B,Z(1), [\, , \,])$ over $B+Z(1).$ Since the tannakian category $<M>^\otimes$ has  rational coefficients, we have that
$ \dim_{\QQ} W_{-1} ( {\Galmot}(M)) = 2 \dim B + \dim Z(1) $.
 Concerning the abelian part
 \[\dim B= 	\dim_F \mathcal{A}b\mathcal{L}og \; \im(v,v^*).\]
 On the other hand, for the toric part $\dim Z(1) = \dim (Z/ Z_1)(1) + Z_1(1)$ by construction. Because of the explicit description of the Lie bracket
$[\, ,\,]: B \otimes B \to X^\vee \otimes Y(1)$ given in \cite[(2.8.4)]{B03},
\[ \dim Z_1(1) =  \dim_{\QQ}  \mathcal{L}og  \;\im (P_\mathcal{P} \circ (v \times v^*)). \]
Finally by construction 
\[ \dim(Z/ Z_1)(1) =  \dim_{\QQ} \mathcal{L}og \; \im (\psi_{| \ker (P_\mathcal{P} \circ (v \times v^*))}) .\]
\end{proof}

\begin{remark} The dimension of the quotient ${\Gr}_{-1}^{\W}(\Galmot(M))$ of the unipotent radical $
\UR( M)$ is twice the dimension of the abelian sub-variety $B$ of $X^\vee  \otimes A+A^* \otimes Y,$ that is 
\[ \dim_\QQ {\Gr}_{-1}^{\W}(\Galmot(M)) = 2 \dim_F \mathcal{A}b\mathcal{L}og \; \im(v,v^*).\]
The dimension of  $ W_{-2} ( {\Galmot}(M))$ is the dimension of the sub-torus $Z(1)$ of $X^\vee \otimes Y(1)$, that is 
\[  \dim_{\QQ} W_{-2} ( {\Galmot}(M)) =  \dim_{\QQ}  \mathcal{L}og  \;\im (P_\mathcal{P} \circ (v \times v^*)) + \dim_{\QQ} \mathcal{L}og \; \im (\psi_{| \ker (P_\mathcal{P} \circ (v \times v^*))})  \]
\end{remark}

\begin{remark} A 1-motive $M=[u:X \to G]$ defined over $K$ is said to be \textit{deficient} if $ \W_{-2}(\Galmot(M))=0.$ In \cite{JR} Jacquinot and Ribet construct such a 1-motive in the case $\mathrm{rk}(X)=\mathrm{rk}(Y^\vee)=1$. By the above Theorem we have that $M$ is deficient if and only if for any $(x,y^{\vee}) \in X \otimes Y^{\vee}$, 
\[ P_\mathcal{P} (v(x), v^*(y^{\vee})) =1 \quad \mathrm{and} \quad \psi_{| \ker (P_\mathcal{P} \circ (v \times v^*))}(x,y^{\vee}) =1,\]
that is if and only if the two arrows $P_\mathcal{P} \circ (v \times v^*): X \otimes Y^{\vee} \to  \ZZ(1)$ and 
$\psi_{| \ker (P_\mathcal{P} \circ (v \times v^*))} : X \otimes Y^{\vee} \to  \ZZ(1)$ are the trivial arrow.
\end{remark}

Now let $M=[u:{\ZZ}^r \to G]$ be a 1-motive defined over $K$, with $G$ an extension of a product $\Pi^n_{j=1} \cE_j $ of pairwise not isogenous elliptic curves by the torus $\GG_m^s.$ We go back to the notation used in Section \ref{periods}.
Denote by $\pr_h:\Pi^n_{j=1} \cE_j \to  \cE_h$ and $\pr_h^*:\Pi^n_{j=1} \cE_j^* \to  \cE_h^*$ the projections into the $h$-th elliptic curve and consider the composites 
$ v_h= \pr_h \circ v:\ZZ^r \rightarrow \cE_h$ and $v^*_h=\pr_h^* \circ v^* :\ZZ^s \rightarrow  \cE_h^*.$ 
Let $\mathcal{P}$ be the Poincar\'e biextension of $(\Pi^n_{j=1} \cE_j, \Pi^n_{j=1} \cE_j^*)$ by $\GG_m$
and let $\mathcal{P}_j$ be the Poincar\'e biextension of $( \cE_j, \cE_j^*)$ by $\GG_m$.
The category of biextensions is additive in each variable, and so we have that 
$P_\mathcal{P} = \Pi^n_{j=1}P_{\mathcal{P}_j}$, where $P_{\mathcal{P}_j}: \cE_j \otimes \cE_j^*\to  \ZZ(1)$ is the Weil pairing of the elliptic curve $\cE_j$.

\begin{corollary}\label{eq:dimGalMot}
	Let $M=[u:{\ZZ}^r \to G]$ be a 1-motive defined over $K$, with $G$ an extension of a product $\Pi^n_{j=1} \cE_j $ of pairwise not isogenous elliptic curves by the torus $\GG_m^s.$  Denote by 
	$ k_j=\End( \cE_j) \otimes_\ZZ \QQ$ the field of endomorphisms of the elliptic curve $\cE_j$ for $j=1, \dots, n.$ Let $x_1, \dots, x_r$ be generators of the character group ${\ZZ}^r$ and let $y^\vee_1, \dots , y^\vee_{s}$ be generators of the character group ${\ZZ}^{s}.$ Then
	\[
	\dim_\QQ \Galmot(M) =  4 \sum_{j=1}^n (\dim_{\QQ} k_j)^{-1}-n+1 + \sum_{j=1}^n  2 \dim_{k_j} \mathcal{A}b \mathcal{L}og \; \im (v_j,v^*_j) + \]
	\[
  \dim_{\QQ}  \mathcal{L}og  \;\im (P_\mathcal{P} \circ (v \times v^*)) + 
\dim_{\QQ}  \mathcal{L}og \; \im (\psi_{| \ker (P_\mathcal{P} \circ (v \times v^*))})
\]
\begin{itemize}
	\item $\mathcal{A}b \mathcal{L}og \; \im (v_j,v^*_j)$ is the $k_j$-sub-vector space of $\CC /  k_j \, \omega_{j1}+  k_j \, \omega_{j2}$ generated by the elliptic logarithms 
	$\{ p_{jk}, q_{ji} \}_{ k=1,  \ldots, r \atop  i=1,  \ldots, s}$ of the points $\{ P_{jk}, Q_{ji} \}_{ k=1,  \ldots, r \atop  i=1,  \ldots, s}$ for $j=1,  \ldots, n;$
	\item  $ \mathcal{L}og  \; \im ( P_\mathcal{P} \circ (v \times v^*))$ is the $\QQ\,$-sub-vector space of $\CC / 2 i \pi \QQ$	generated by the logarithms 
		\par\noindent  $\{ \log P_{\mathcal{P}_j}(P_{jk}, Q_{ji} ) \}_{ k=1,  \ldots,r, \; \; i=1,  \ldots, s \atop j=1, \ldots, n}$;  
		\item  $  \mathcal{L}og  \; \im (\psi_{| \ker (P_\mathcal{P} \circ (v \times v^*))})$ is the $\QQ\,$-sub-vector space of $\CC / 2 i \pi \QQ$ generated by the logarithms
	 $\{ \log \psi(x_{k'} ,y^\vee_{i'} ) \}_{(x_{k'} ,y^\vee_{i'} ) \in \ker (P_{\mathcal{P}_j} \circ (v_j\times v^*_j))  \atop 
		 1\leq {k'} \leq r, \;  1\leq {i'} \leq 	s, \; j=1,  \ldots, n	} .$
\end{itemize}
	\end{corollary}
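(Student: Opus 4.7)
The plan is to apply Corollary \ref{eq:DecomDim} to write
\[
\dim_\QQ \Galmot(M) \;=\; \dim_\QQ \Galmot(\widetilde M) \;+\; \dim_\QQ \UR(M),
\]
and then compute each summand separately, using Lemma \ref{eq:dimGr0} for the first and Theorem \ref{eq:dimUR} for the second. The reductive piece reduces to computing $\dim \Galmot(A)$ for $A = \Pi_{j=1}^n \cE_j$, which is the Mumford-Tate group of a product of pairwise non-isogenous elliptic curves by the equality (\ref{dimGalMT}). Since the elliptic curves are pairwise non-isogenous, $\Galmot(A)$ is the fibered product over the similitude factor $\GG_m$ of the $\Galmot(\cE_j)$, each of which has dimension $4/\dim_\QQ k_j$ (equal to $4$ in the non-CM case and $2$ in the CM case). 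This yields exactly $4\sum_{j=1}^n (\dim_\QQ k_j)^{-1} - (n-1)$.

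For the unipotent radical, I would plug the data $X = \ZZ^r$, $Y^\vee = \ZZ^s$, $A = \Pi_{j=1}^n \cE_j$, $F = \End(A)\otimes_\ZZ\QQ = \prod_{j=1}^n k_j$ into Theorem \ref{eq:dimUR}. The only non-routine point is the decomposition of the abelian contribution $2\dim_F \mathcal{A}b\mathcal{L}og\,\im(v,v^*)$. I would argue that since the $\cE_j$ are pairwise non-isogenous, the product decomposition $A = \Pi_j \cE_j$ is compatible with the $F$-module structure: the period lattice $\sum F\omega_{nm}$ splits as a direct sum of the individual $k_j(\omega_{j1}) + k_j(\omega_{j2})$, and the image of $(v,v^*)$ in $\CC/\sum F\omega_{nm}$ splits as a direct sum over $j$ of the images of $(v_j,v^*_j)$ in $\CC/(k_j\omega_{j1}+k_j\omega_{j2})$. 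This gives
\[
2\dim_F \mathcal{A}b\mathcal{L}og\,\im(v,v^*) \;=\; \sum_{j=1}^n 2\dim_{k_j} \mathcal{A}b\mathcal{L}og\,\im(v_j,v^*_j).
\]

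The two toric summands $\dim_\QQ \mathcal{L}og\,\im(P_\mathcal{P}\circ(v\times v^*))$ and $\dim_\QQ \mathcal{L}og\,\im(\psi_{|\ker(P_\mathcal{P}\circ(v\times v^*))})$ are transported directly from Theorem \ref{eq:dimUR} without change; the observation that $P_\mathcal{P} = \Pi_{j=1}^n P_{\mathcal{P}_j}$, following from additivity of the category of biextensions in each variable, lets us rewrite $P_\mathcal{P}(v(x_k),v^*(y^\vee_i))$ as the product over $j$ of the Weil pairings $P_{\mathcal{P}_j}(P_{jk},Q_{ji})$, which is how the logarithms appear in the corollary's statement. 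Summing the reductive contribution and the three unipotent contributions produces the claimed formula.

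The main obstacle is the compatibility of the $F$-module structure with the decomposition $A = \Pi_j \cE_j$ in the abelian term: one must verify that after quotienting by the period lattice in $\CC$ (rather than in $\Lie A_\CC$), the $F = \prod k_j$-span of the logarithms $\{\log_A v(x_k), \log_A v^*(y_i^\vee)\}$ really splits as a direct sum of $k_j$-spans indexed by $j$. This uses in an essential way that an $F$-linear relation among the $\log_A$'s would project to a $k_j$-linear relation in each factor, and conversely that independent $k_j$-relations in each factor assemble to an $F$-relation; the hypothesis of pairwise non-isogeny makes $F$ a product of fields with no interactions between factors, so the decomposition holds.
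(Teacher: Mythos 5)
Your proof is correct and follows essentially the same route as the paper: decompose via Corollary~\ref{eq:DecomDim}, identify the reductive part using Lemma~\ref{eq:dimGr0}, compute $\dim\Galmot(\Pi_j\cE_j)$ for pairwise non-isogenous factors (the paper simply cites Moonen for the formula $4\sum_j(\dim_\QQ k_j)^{-1}-n+1$, where you sketch the fibered-product derivation), and plug $X=\ZZ^r$, $Y^\vee=\ZZ^s$, $F=\prod_j k_j$ into Theorem~\ref{eq:dimUR}. Your added discussion of why the $F$-span of the abelian logarithms splits across the idempotents of $F=\prod_j k_j$ is a detail the paper leaves implicit but does not change the argument.
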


\begin{proof} Since the elliptic curves are pairwise not isogenous, by 
\cite[\S 2]{Moonen} and (\ref{dimGalMT}) we have that
\[\dim  \Galmot\big(\Pi_{j=1}^{n} \cE_j \big)=4 \; \sum_{j=1}^n 
(\dim_\QQ k_j)^{-1}-n+1.\]
Therefore putting together Corollary \ref{eq:DecomDim}, Lemma \ref{eq:dimGr0} and Theorem \ref{eq:dimUR} we can conclude.
\end{proof}

\begin{remark} We can express the dimension of the motivic Galois group of a product of elliptic curves also as
	 $ 3 n_1 +n_2 +1,$ where $n_1$ is the number of elliptic curves without complex multiplication and $n_2$ is the number of elliptic curves with complex multiplication. Therefore
	 	\[
	 \dim \Galmot(M) =\dim\UR (M) + 3 n_1 +n_2 +1
	 \]
\end{remark}

\section{The 1-motivic elliptic conjecture}\label{conjecture}

\pn \textbf{The 1-motivic elliptic conjecture}
\par\noindent Consider
\begin{itemize}
	\item  $n$ elliptic curves $\cE_1, \dots, \cE_n$ pairwise not isogenous. 
	For $j=1,  \ldots, n,$ denote by $ k_j=\End( \cE_j) \otimes_\ZZ \QQ$ the field of endomorphisms of $\cE_j$ and let
	 $g_{2j}=60 \; \mathrm{G}_{4j}$ and $g_{3j}=140 \; \mathrm{G}_{6j}$, where $\mathrm{G}_{4j}$ and $\mathrm{G}_{6j}$ are the Eisenstein series relative to the lattice $ \HH_1(\cE_j(\CC),\ZZ)$ of weight 4 and 6 respectively;
	\item  $s$ points $Q_i= (Q_{1i},\dots,Q_{ni})$ of  $\Pi^n_{j=1} \cE_j^*(\CC)$ for $i=1,\ldots, s$.
	These points determine an extension $G$ of $\Pi^n_{j=1} \cE_j $ by $
	{\GG}_m^s$;
	\item  $r$ points $R_1,\dots,R_r $ of $G(\CC)$.  Denote by
	$(P_{1k},\dots,P_{nk})  \in \Pi^n_{j=1} \cE_j(\CC)$ the projection of the point $R_k$ on $\Pi^n_{j=1} \cE_j$ for $k=  \ldots, r.$
\end{itemize}
Then

\[
\mathrm{tran.deg}_{\QQ}\, \QQ  \Big(2i \pi,
g_{2j},g_{3j}, Q_{ji},R_{k},\omega_{j1},\omega_{j2},\eta_{j1},
\eta_{j2},
p_{jk},\zeta_j(p_{jk}),\]
\[
\eta_{j1} q_{ji} - \omega_{j1} \zeta_j(q_{ji}),\eta_{j2} q_{ji} - \omega_{j2} \zeta_j(q_{ji}) , 
\log f_{q_{ji}}(p_{jk}) +l_{jik}
{\Big)}_{j=1,\ldots,n \;\;  i=1,\ldots,s   \atop
	k=1,\dots,r  } \geq \]
	\[ 4 \sum_{j=1}^n (\dim_{\QQ} k_j)^{-1}-n+1 +  \sum_{j=1}^n  2 \dim_{k_j} \mathcal{A}b \mathcal{L}og \; \im (v_j,v^*_j) + \]
	\[  \dim_{\QQ}  \mathcal{L}og  \;\im (P_\mathcal{P} \circ (v \times v^*)) + 
\dim_{\QQ}  \mathcal{L}og \; \im (\psi_{| \ker (P_\mathcal{P} \circ (v \times v^*))})
\]
where
\begin{itemize}
	\item $\mathcal{A}b \mathcal{L}og \; \im (v_j,v^*_j)$ is the $k_j$-sub-vector space of $\CC /  k_j \, \omega_{j1}+  k_j \, \omega_{j2}$ generated by the elliptic logarithms 
	$\{ p_{jk}, q_{ji} \}_{ k=1,  \ldots, r \atop  i=1,  \ldots,s}$ of the points $\{ P_{jk}, Q_{ji} \}_{ k=1,  \ldots, r \atop  i=1,  \ldots, s}$ for $j=1,  \ldots, n;$
	\item  $ \mathcal{L}og  \; \im ( P_\mathcal{P} \circ (v \times v^*))$ is the $\QQ\,$-sub-vector space of $\CC / 2 i \pi \QQ$	generated by the logarithms 
		\par\noindent  $\{ \log P_{\mathcal{P}_j}(P_{jk}, Q_{ji} ) \}_{ k=1,  \ldots, r, \; \; i=1,  \ldots, s \atop j=1, \ldots, n}$;  
		\item  $  \mathcal{L}og  \; \im (\psi_{| \ker (P_\mathcal{P} \circ (v \times v^*))})$ is the $\QQ\,$-sub-vector space of $\CC / 2 i \pi \QQ$ generated by the logarithms
	 $\{ \log \psi(x,y^\vee ) \}_{(x,y^\vee ) \in \ker (P_{\mathcal{P}_j} \circ (v_j\times v^*_j))  \atop 
		(x,y^\vee ) \in \ZZ^r \otimes \ZZ^s} .$
\end{itemize}

Because of Proposition \ref{proof-periods} and Corollary \ref{eq:dimGalMot}, we can conclude that 

\begin{theorem}\label{thmMain}
 Let $M=[u:{\ZZ}^r \to G]$ be a 1-motive defined over $K$, with $G$ an extension of a product $\Pi^n_{j=1} \cE_j $ of pairwise not isogenous elliptic curves by the torus $\GG_m^s.$ Then the Generalized Grothendieck's Period Conjecture applied to $M$ is equivalent to the 1-motivic elliptic conjecture. 
\end{theorem}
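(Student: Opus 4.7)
The proof is essentially an assembly of the machinery developed in the previous sections: the explicit period list of Proposition \ref{proof-periods} on the analytic side, and the dimension formula of Corollary \ref{eq:dimGalMot} on the motivic side. My plan is to check that, under a suitable identification of base fields, the two sides of (\ref{eq:GCP}) match term-by-term with the two sides of the 1-motivic elliptic conjecture.

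First I would reduce to comparing the right-hand sides. Corollary \ref{eq:dimGalMot} gives an explicit formula for $\dim_\QQ \Galmot(M)$ as the sum $4\sum_j (\dim_\QQ k_j)^{-1}-n+1$ (the reductive part coming from the product of pairwise non-isogenous elliptic curves) plus the three terms $2\dim_{k_j}\mathcal{A}b\mathcal{L}og\;\im(v_j,v_j^*)$, $\dim_\QQ \mathcal{L}og\;\im(P_{\mathcal P}\circ(v\times v^*))$, and $\dim_\QQ \mathcal{L}og\;\im(\psi_{|\ker(P_{\mathcal P}\circ(v\times v^*))})$ coming from the unipotent radical $\UR(M)$. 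These are precisely the quantities appearing on the right-hand side of the 1-motivic elliptic conjecture, so this part is immediate.

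Next I would compare the left-hand sides, which is the heart of the argument. By Proposition \ref{proof-periods}, applied after the tannakian decomposition of Lemma \ref{lem:decomposition} into the 1-motives $M_{jik}$, the field $K(\mathrm{periods}(M))$ is generated over $K$ by the list
\[
1,\ \omega_{j1},\omega_{j2},\eta_{j1},\eta_{j2},\ p_{jk},\zeta_j(p_{jk}),\ \eta_{j1}q_{ji}-\omega_{j1}\zeta_j(q_{ji}),\ \eta_{j2}q_{ji}-\omega_{j2}\zeta_j(q_{ji}),\ \log f_{q_{ji}}(p_{jk})+l_{jik},\ 2i\pi,
\]
with $e^{l_{jik}}\in K^*$. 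To pass from $K$ to the canonical field $\QQ$ appearing in the 1-motivic elliptic conjecture, I would enlarge $K$ with the coefficients $g_{2j},g_{3j}$ (which determine the elliptic curves $\cE_j$), the $K$-rational coordinates of the points $Q_{ji}\in\cE_j^*(K)$, and the $K$-rational coordinates of the points $R_k\in G(K)$. Conversely, as observed in the remark following Proposition \ref{proof-periods}, the ambiguity in the $l_{jik}$ and in the choice of lifts of paths is absorbed by $2i\pi\in K(\mathrm{periods}(M))$, so adding these generators to $\QQ$ one recovers exactly $K(\mathrm{periods}(M))$ up to a field of transcendence degree $0$ over the field generated by the conjecture's list. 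Thus the two transcendence degrees coincide.

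The main (small) obstacle here is bookkeeping: showing carefully that enlarging the ground field from $\QQ$ to $K$ does not alter the transcendence degree of the period field, because all the ``algebraic data'' (the $g_{2j},g_{3j}$, the $Q_{ji}$ and the $R_k$) have been explicitly included among the generators in the statement of the 1-motivic elliptic conjecture, while the transcendental ambiguity introduced by the unsplit extensions (\ref{eq:Hodge}) lies in the $\QQ$-span of the periods already present (in particular of $2i\pi$). Once this verification is in place, combining Proposition \ref{proof-periods} and Corollary \ref{eq:dimGalMot} shows that the inequality (\ref{eq:GCP}) for $M$ is exactly the inequality of the 1-motivic elliptic conjecture, proving the equivalence.
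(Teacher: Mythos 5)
Your proposal is correct and follows essentially the same approach as the paper: the paper's proof is literally the one-line observation that the statement follows from Proposition \ref{proof-periods} (which makes the left-hand sides of the two inequalities match, after identifying $K$ with the minimal field of definition $\QQ(g_{2j},g_{3j},Q_{ji},R_k)$) together with Corollary \ref{eq:dimGalMot} (which makes the right-hand sides match). Your additional bookkeeping about the base-field change and about the ambiguity in the $l_{jik}$ and path-lifting choices is a reasonable, if slightly more verbose, spelling-out of what the paper leaves implicit.
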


\begin{remark} \label{Rk1}
If $Q_{ji}=0$ for $j=1, \dots,n$ and $i=1, \dots,s$, the above conjecture is the elliptic-toric conjecture stated in \cite[1.1]{B02}, which is equivalent to the Generalized Grothendieck's Period Conjecture applied to the 1-motive $M=[u: \Pi_{k=1}^r z_k {\ZZ} \to \GG_m^s \times \Pi^n_{j=1} \cE_j]$ with $u(z_k) = (R_{1k}, \dots,R_{sk} , P_{1k}, \dots , P_{nk}) \in \GG_m^s (K) \times \Pi^n_{j=1} \cE_j(K).$
\end{remark}

\begin{remark} \label{Rk2}
	If $Q_{ji}=P_{ij}=\cE_j=0$ for $j=1, \dots,n$ and $i=1, \dots,s$, the above conjecture is equivalent to the Generalized Grothendieck's Period Conjecture applied to the 1-motive $M=[u: \Pi_{k=1}^r z_k {\ZZ} \to \GG_m^s]$ with $u(z_k) = (R_{1k}, \dots,R_{sk}) \in \GG_m^s (K) $, which in turn is equivalent to the 
 Schanuel conjecture (see \cite[Cor 1.3 and \S 3]{B02}).
\end{remark}


\section{Low dimensional case: $r=n=s=1$}\label{lowDim}

In this section we work with a 1-motive $M=[ u:\ZZ \rightarrow G], u(1)=R,$ defined over $K$ in which 
 $G$ is an extension of just one elliptic curve $\cE$ by the torus $\GG_m$, i.e. $r=s=n=1$.

Let $g_2=60 \; \mathrm{G}_4$ and $g_3=140 \; \mathrm{G}_6$ with $\mathrm{G}_4$ and $\mathrm{G}_6$ the Eisenstein series relative to the lattice $\Lambda := \HH_1(\cE(\CC),\ZZ)$ of weight 4 and 6 respectively. The field of definition $K$ of the 1-motive  $M=[u:\ZZ \rightarrow G], u(1)=R$ is 
\[\QQ \big( g_2, g_3, Q,R \big).\]
 By Proposition \ref{proof-periods}, the field $K (\mathrm{periods}(M))$ generated over $K$ by the periods of $M$, which are the coefficients of the matrix (\ref{eq:matrix-periods}), is 
\[\QQ \Big(g_2, g_3, Q, R, 2 i \pi,  \omega_1,\omega_2,\eta_1,\eta_2, p,\zeta(p), \eta_1 q - \omega_1 \zeta(q),\eta_2 q - \omega_2 \zeta(q) , \log f_q(p) +l \Big).
\]

$\End(\cE) \otimes_\ZZ \QQ$-linear dependence between the points $P$ and $Q$ and torsion properties of the points $P, Q,R$  affect the dimension of the unipotent radical of $\Galmot (M)$.
By Corollary \ref{eq:dimGalMot} we have the following table concerning the dimension of the motivic Galois group $\Galmot (M)$ of $M$: 

\begin{center}
	\begin{tabular}{|c|c|c|c|c|}
		\hline
		& $\dim \UR (M)$  & $\dim \Galmot (M) $  & $\dim \Galmot (M) $ & $M$   \\
		&  &  $\cE$ CM  &  $\cE$ not CM & \\ \hline
		Q, R torsion & 0 & 2& 4 & $M=[u:\ZZ \rightarrow \cE \times \GG_m]$ \\ 
		($\Rightarrow$ P torsion)  & & & & $ u(1)=(0,1)$ \\ \hline
		P,Q torsion  &1 & 3& 5 &  $M=[u:\ZZ \rightarrow \cE \times \GG_m] $\\ 
		(R not torsion)  & & & &  $u(1)=(0,R) $ \\ \hline
		R torsion   &2 & 4& 6 & $M=[u:\ZZ \rightarrow G]$\\
		($\Rightarrow$ P torsion)  & & & & $ u(1)=0$\\ \hline
		Q torsion  &3 &5 & 7 & $M=[u:\ZZ \rightarrow \cE \times \GG_m]$\\ 
		(P and R not torsion)  & & & & $u(1)=(P,R) $  \\ \hline
		P torsion  &3 &5 &7  &  $M=[u:\ZZ \rightarrow \cE^* \times \GG_m] $ \\ 
		(R and Q not torsion)  & & & & $u(1)=(Q,R) $ \\ \hline
		P,Q   &5 &7 & 9  & $M=[u:\ZZ \rightarrow G]$\\ 
		$\End( \cE) \otimes_\ZZ \QQ$-lin indep  & & & &$ u(1)=R$\\ \hline
	\end{tabular}
\end{center}

We can now state explicitly the Generalized Grothendieck's Period Conjecture (\ref{eq:GCP}) 
for the 1-motives involved on the above table:

\begin{itemize}
	\item $R$ and $Q$ are torsion:
	We work with the 1-motive $M=[u:\ZZ \rightarrow \cE \times \GG_m], u(1)=(0,1)$
	or $M=[0 \to \cE].$ If $\cE$ is not CM, 
	\[\mathrm{tran.deg}_{\QQ}\, \QQ
	\Big( g_2,g_3,\omega_1,\omega_2,\eta_1,
	\eta_2  \Big)\geq 4\]
	that is 4 at least of the 6 numbers $ g_2,g_3,\omega_1,\omega_2,\eta_1, \eta_2$ are algebraically independent over $\QQ$. If $\cE$ is CM, 
	\[\mathrm{tran.deg}_{\QQ}\, \QQ
	\Big( g_2,g_3,\omega_1,\eta_1  \Big)\geq 2\]
	that is 2 at least of the 4 numbers $g_2,g_3,\omega_1,\eta_1$ are algebraically
	independent over $\QQ.$  
	If we assume $g_2,g_3 \in \overline{\QQ},$ we get the Chudnovsky Theorem: $\mathrm{tran.deg}_{\QQ}\, \QQ
	(\omega_1,\eta_1  )=2.$

	\item $P$ and $Q$ are torsion:
	We work with the 1-motive $M=[u:\ZZ \rightarrow \cE \times \GG_m], u(1)=(0,R)$ (this case was studied in the author's Ph.D thesis, see \cite{B02}).
	If $\cE$ is not CM, 
	\[\mathrm{tran.deg}_{\QQ}\, \QQ
	\Big( g_2,g_3,\omega_1,\omega_2,\eta_1,\eta_2,R, \log(R)  \Big)\geq 5\]
	that is 5 at least of the 8 numbers $g_2,g_3,\omega_1,\omega_2,\eta_1,
	\eta_2,R, \log(R)$ are algebraically independent over $\QQ$. If $\cE$ is CM, 
	\[\mathrm{tran.deg}_{\QQ}\, \QQ
	\Big( g_2,g_3,\omega_1,\eta_1,R, \log(R)  \Big)\geq 3\]
	that is 3 at least of the 6 numbers $g_2,g_3,\omega_1,\eta_1,R, \log(R)$ are algebraically independent over $\QQ$.

	\item $R$ is torsion:
	We work with the 1-motive $M=[u:\ZZ \rightarrow G], u(1)=0$
	or $M=[v^*:\ZZ \to \cE^*], v^*(1)=Q.$ If $\cE$ is not CM, 
	\[\mathrm{tran.deg}_{\QQ}\, \QQ
	\Big( g_2,g_3,\omega_1,\omega_2,\eta_1, \eta_2,Q,q, \zeta(q) \Big)\geq 6\]
	that is 6 at least of the 9 numbers $g_2,g_3,\omega_1,\omega_2,\eta_1, \eta_2,Q,q, \zeta(q)$ are algebraically independent over $\QQ$. If $\cE$ is CM, 
	\[\mathrm{tran.deg}_{\QQ}\, \QQ
	\Big(   g_2,g_3,\omega_1,\eta_1,Q, q, \zeta(q)  \Big)\geq 4\]
	that is 4 at least of the 7 numbers $ g_2,g_3,\omega_1,\eta_1,Q, q, \zeta(q)$ are algebraically independent over $\QQ$.

	
	\item $Q$ is torsion:
	We work with the 1-motive  $M=[u:\ZZ \rightarrow \cE \times \GG_m], u(1)=(P,R)$ (this case was considered in the author's Ph.D thesis, see \cite{B02}).
	If $\cE$ is not CM, 
	\[\mathrm{tran.deg}_{\QQ}\, \QQ
	\Big( g_2,g_3,\omega_1,\omega_2,\eta_1, \eta_2,  P,R,p,\zeta(p), \log(R)  \Big)\geq 7\]
	that is 7 at least of the 11 numbers $g_2,g_3,\omega_1,\omega_2,\eta_1, \eta_2,  P,R,p,\zeta(p), \log(R) $ are algebraically independent over $\QQ$. If $\cE$ is CM, 
	\[\mathrm{tran.deg}_{\QQ}\, \QQ
	\Big(g_2,g_3,\omega_1,\eta_1,  P,R,p,\zeta(p), \log(R)  \Big)\geq 5\]
	that is 5 at least of the 9 numbers $g_2,g_3,\omega_1,\eta_1,  P,R,p,\zeta(p), \log(R) $ are algebraically independent over $\QQ$.

	
	\item $P$ is torsion:
	We work with the 1-motive $M=[u:\ZZ \rightarrow G], u(1)=R \in \GG_m(K)$ or $M=[u:\ZZ \rightarrow \cE^* \times \GG_m], u(1)=(Q,R).$ If $\cE$ is not CM, 
	\[\mathrm{tran.deg}_{\QQ}\, \QQ
	\Big(  g_2,g_3,\omega_1,\omega_2,\eta_1,
	\eta_2,Q,R, q, \zeta(q),\log (R)  \Big)\geq 7\]
	that is 7 at least of the 11 numbers $ g_2,g_3,\omega_1,\omega_2,\eta_1,
	\eta_2,Q,R, q, \zeta(q),\log (R)  $ are algebraically independent over $\QQ$. If $\cE$ is CM, 
	\[\mathrm{tran.deg}_{\QQ}\, \QQ
	\Big(  g_2,g_3,\omega_1,\eta_1,Q,R, q, \zeta(q),\log (R)  \Big)\geq 5\]
	that is 5 at least of the 9 numbers $ g_2,g_3,\omega_1,\eta_1,Q,R, q, \zeta(q),\log (R)  $ are algebraically independent over $\QQ$.  
	
	
	\item $P,Q,R$ are not torsion and $P,Q$ are $\End( \cE) \otimes_\ZZ \QQ$-linearly independent:
	We work with the 1-motive $M=[u:\ZZ \rightarrow G], u(1)=R \in G(K).$ 
	If $\cE$ is not CM, 
	\[\mathrm{tran.deg}_{\QQ}\, \QQ
	\Big(  g_2, g_3, Q, R, \omega_1,\omega_2,\eta_1,\eta_2, p,\zeta(p),q, \zeta(q), \eta_1 q - \omega_1 \zeta(q),\eta_2 q - \omega_2 \zeta(q) , \log f_q(p)+l  \Big)\geq 9\]
	that is 9 at least of the 15 numbers $ g_2, g_3, Q, R, \omega_1,\omega_2,\eta_1,\eta_2, p,\zeta(p),q, \zeta(q), \eta_1 q - \omega_1 \zeta(q),\eta_2 q - \omega_2 \zeta(q) , \log f_q(p) $ are algebraically independent over $\QQ$. If $\cE$ is CM, 
	\[\mathrm{tran.deg}_{\QQ}\, \QQ
	\Big( g_2, g_3, Q, R, \omega_1,\eta_1, p,\zeta(p),q, \zeta(q), \eta_1 q - \omega_1 \zeta(q),\eta_2 q - \omega_2 \zeta(q) , \log f_q(p) +l  \Big)\geq 7\]
	that is 7 at least of the 13 numbers $ g_2, g_3, Q, R, \omega_1,\eta_1, p,\zeta(p),q, \zeta(q), \eta_1 q - \omega_1 \zeta(q),\eta_2 q - \omega_2 \zeta(q) , \log f_q(p)   $ are 
	algebraically independent over $\QQ$.  
	
\end{itemize}



\bibliographystyle{plain}

\newpage
\section*{Letter of Y. Andr\'e}

 \rightline{Paris, 29 may 2019}

\bigskip 
\bigskip 
{Dear Cristiana},

\bigskip
Following your query, I will try to summarize the formalism of Grothendieck's period conjecture, present different variants,  sketch their relations and give some historical hints and references. 

\smallskip {\bf Origins}. Grothendieck's period conjecture deals with transcendence properties of periods of algebraic varieties defined over a number field. In essence, it predicts that algebraic relations between periods come from geometry. Its first mention appears as a footnote in Grothendieck's letter to Atiyah (Publ. IHES 29, 1966) where, after mentioning Schneider's results on elliptic periods, he alludes to the existence of a general conjecture. A first published statement is contained in Lang's book on transcendental number theory (Addison-Wesley 1966, chap. 4, historical note). The next published related statement, without mention of Grothendieck's name/conjecture, is at the beginning of Deligne's paper ``Hodge cycles on abelian varieties"  (Springer LN 900, 1982; see also the end of its announcement, Bull. SMF. 1980). 

The next published statement, and explanation of the relationship between the previous statements, is in chapter 9 of my book on G-functions (Vieweg 1989, recently reprinted by Springer), entitled ``towards Grothendieck's conjecture on periods of algebraic manifolds". A more complete exposition of the formalism, and its relation to a fullness conjecture of enriched realization of motives (parallel to the Hodge or Tate conjectures) is discussed in my SMF book on motives, denoted henceforth [IM] (2004); related material forms the whole third part of that book. 
In [IM, 23.4-5] and in other contemporary papers, I extended the period conjecture in two directions: $i)$ the idea of Galois theory of periods, $ii)$ the generalization of the period conjecture for motives defined over an arbitrary subfield of $\bf C$.

A different but related thread came with Kontsevich's period conjecture (preliminary version by the SMF, 1998; final version with Zagier: ``periods", Springer 2001). Among other things, he conjectured that algebraic relations between periods come from the formal properties of $\int$, and indicated (relying on Nori's work) that this conjecture is equivalent to Grothendieck's period conjecture for all motives over number fields.

\smallskip {\bf Motivic Galois groups}. In order to express appropriately his intuition that algebraic relations between periods should come from geometry, Grothendieck uses his idea of motives and motivic Galois theory\footnote{this statement is not just a conjecture in the history of mathematics: a decade ago in Montpellier, I had the priviledge to consult some unpublished notes by Grothendieck on motives (which by now may be online: (https://grothendieck.umontpellier.fr/archives-grothendieck/, cotes 10-19 - thanks to J. Fresan for the reference), and I saw that he really wrote the period conjecture essentially as formulated below.}. At Grothendieck's time, however, this theory was only a dream (with precise contours), so that the period conjecture was more a metaconjecture than a conjecture (in the sense that some terms were not well-defined); but some consequences of the conjecture could be formulated in well-defined terms, and all ensuing statements, however remote from the original intuition, were called "period conjectures", creating many an ambiguity. 

Nowadays, there is an unconditional tannakian category of motives over any field $k$ of characteristic zero, which is of ``geometric" nature (in the sense that the morphisms arise ``somehow" from algebraic correspondences), so that the conjecture can be neatly stated, and reflects the original intuition. In fact, three such theories have been constructed: the first (restricted to pure motives, i.e. motives coming from projective smooth $k$-varieties) was defined in my IHES paper (1996); the second by Nori (unpublished notes have circulated, and there is now the book by Huber and M\"uller-Stach, Springer 2017), the third by Ayoub (he defines a motivic Galois group using Voevodsky's triangulated motives). These constructions look quite different, but turn out to be compatible (as shown by Arapura, Choudhury/Gallauer): in short, Nori's and Ayoub's absolute mixed motivic Galois groups over $k$ are "the same", and my absolute pure motivic Galois group is just its pro-reductive quotient. In the sequel, I will thus speak about ``the" motivic Galois group $G_{mot}(M)$ of a motive $M$ defined over $k\subset \bf C$, without being more explicit: this is a well-defined linear algebraic group defined over $\bf Q$ (a closed subgroup of the group of linear automorphisms of the Betti realization of $M$), which is reductive if $M$ is pure. The absolute motivic Galois group of $k$ is ``their projective limit" for various $M$.

These groups contain other previously defined groups: $$G_{mot}(M)\supset MTA(M) \supset MT(M),$$ where $MT(M)$ is the Mumford-Tate group attached to (the mixed Hodge structure of) $M_{\bf C}$, and $MTA(M)$ is the absolute Mumford-Tate group defined by Deligne (a.k.a. ``absolute Hodge motivic Galois group"). The definition and computation of $MT(M)$ being easier than the others, it is interesting to know when these groups coincide. When $M$ is an abelian variety or a $1$-motive, defined over an algebraically closed $k \subset \bf C$, Deligne and Brylinsky proved that $MTA(M) \supset MT(M)$ is an equality, and I later proved the stronger statement that $G_{mot}(M) = MT(M)$ in those cases (Imrn 2019). 

\smallskip {\bf Period torsors}. For any (pure or mixed) motive $M$ over $k \subset \bf C$, $G_{mot}(M)$ is defined as the group scheme of $\otimes$-automorphisms of the Betti realization of the tannakian category $\langle M\rangle$ generated by the motive $M$.  One may also consider the algebraic De Rham realization, with values in $k$-vectors spaces. The scheme of $\otimes$-isomorphisms from De Rham to Betti$\otimes k$ is a torsor under 
$G_{mot}(M)_k$, the period torsor $\Pi(M)$. The name comes from the fact that integration gives rise to the ``period isomorphism" $$\int: \,H_{dR}(M)\otimes_k {\bf C} \cong H_B(M)\otimes_{\bf Q}\bf C$$ (concretely, a matrix with entries the periods of $M$), and further to a canonical $\bf C$-point of $\Pi(M)$:
 $$\varpi:  {\rm{Spec}}\,{\bf C}\to \Pi(M).$$

\smallskip {\bf Grothendieck's period conjecture.} $(?) $ {\it If $k\subset \bar{\bf Q}$, then $\varpi$ maps to the generic point of $\Pi(M)$. }

In more heuristic words, {\it the periods of $M$ generate the $k$-algebra of functions of the $k$-variety $\Pi(M)$}, or else: {\it the algebraic relations between periods of $M$ come from the morphisms in $\langle M\rangle$} (which are of ``geometric" nature).  

I insist that this should hold for any motive (pure or mixed) defined over any algebraic field $k$.

 The conjecture includes the subconjecture that $\Pi(M)$ is irreducible (={connected}, since this is a torsor). In fact, it is equivalent to the connectedness of $\Pi(M)$, plus equality of dimensions: the dimension of the $k$-Zariski closure of the image of $\varpi$ is the dimension of $\Pi(M)$. By the relation between dimension and transcendence degree in commutative algebra, the former dimension is nothing but the transcendence degree over $k$ (or $\bf Q$) of the $k$-subalgebra of $\bf C$ generated by the periods of $M$, and the latter dimension is ${\rm{dim}}\, G_{mot}(M)$ since $\Pi(M)$ is a torsor under $G_{mot}(M)_k$. 
  Therefore $(?)$ is equivalent to the {\it connectedness of $\Pi(M)$}, plus the equality:
$$(??)\;\;\; {\rm{transc. deg}}_{\bf Q}\, k({\rm{periods}}(M)) = {\rm{dim}}\, G_{mot}(M).$$
 This formulation is more congenial to transcendental number theorists (provided of course that one knows how to calculate or at least estimate the right hand side), while $(?)$ is more geometric. In some ``applications" of the period conjecture, it may be necessary to take into account the geometry of the period torsor, and not just the numerical identity $(??)$, cf. e.g. [IM 23.2]. 
 
 \smallskip The formulation given in Lang's book is the following: {assume that $M$ is the motive of a projective smooth $k$-variety $X$, and note that any algebraic cycle on $X^n, n\in \bf N,$ has a De Rham class in $H_{dR}(X^n)=H_{dR}(X)^{\otimes n}$ and a Betti class in $H_{B}(X^n)=H_{B}(X)^{\otimes n}$, hence gives rise via $\int$ to polynomial relations of degree $n$ between periods of $M$; the conjecture predicts that 

\centerline{$(???)\;\;\; $ {\it these relations generate an ideal of definition for the period matrix of $M$.}} 
\noindent If one assumes Grothendieck's standard conjecture\footnote{usually, one writes: standard conjecture{\it s}, but in characteristic $0$ they amount to one single statement, cf. e.g. [IM, chap. 5].}, $G_{mot}(M)$ is the group which fixes the classes of algebraic cycles in tensor powers of $H_B(M)$, and parallely, the previous period relations are equations for $\Pi(M)$. It is not difficult to deduce from there that {\it in the pure case, $(???)$ is equivalent to $(?)$ plus the standard conjecture}. 
 
  \smallskip The relation with Kontsevich's period conjecture becomes apparent if one considers all motives together, and not just $\langle M\rangle  $\footnote{however, given your special interest in the case of $1$-motives, let me mention the recent work of Huber and W\"ustholz, who manage to formulate a period conjecture in Kontsevich's style just for $1$-motives.}.  

\smallskip {\bf Relation to fullness of enriched realizations.} A natural framework to deal with period problems is the tannakian category $Vec_{k, \bf Q}$ (appearing in [IM, 7.5]) consisting of a $k$-vector space, a $\bf Q$-vector space, and an isomorphism between their complexifications. De Rham and Betti realizations, together with $\varpi$, give rise to a $\otimes$-functor $dRB$ from $\langle M\rangle  $ to $Vec_{k, \bf Q}$. The period conjecture $(?)$ implies that this functor is {\it full}. But fullness of $dRB$ is a much weaker conjecture\footnote{for instance, if $M$ is the motive of an elliptic curve, fullness follows from known results in transcendental number theory, while $(?)$ is known only in the presence of complex multiplication. Another illustration of the difference arises if one considers all abelian varieties with complex multiplication by a cyclotomic field: fullness of $dRB$ (resp. period conjecture) is equivalent in this case to Rohrlich's (resp. Lang's) conjecture that all monomial (resp. algebraic) relations between special values of the gamma function come from the functional relations. There are several more recent results in the spirit of this fullness conjecture (Andreatta/Barbieri-Viale/Bertapelle, Huber/W\"ustholz, Kahn, myself), but virtually nothing new about $(?)$.}. 

Let $G_{k, \bf Q}(M)$ be the tannakian group attached to $dRB(M)\in Vec_{k, \bf Q}$, a group defined purely in terms of the periods of $M$. One has $MTA(M)\supset G_{k, \bf Q}(M)$}, and the image of $\varpi$ lies in a $G_{k, \bf Q}(M)$-torsor contained in $\Pi(M)$. The fullness of $dRB$ follows if this torsor is $\Pi(M)$ itself (and conversely, if $G_{k, \bf Q}(M)$ is a so-called observable subgroup of $G_{mot}(M)$).

 \smallskip {\it Remark}. In $(??)$, inequality $\leq$ is unconditional. Moreover, it also holds with $G_{mot}(M)$ replaced by $MTA(M)$ or $G_{k, \bf Q}(M)$. It follows that $(?)$ implies $G_{mot}(M) = MTA(M)= G_{k, \bf Q}(M)$. In fact, $(??)$ splits into two equalities 
 $$(??)'\;\;\; {\rm{transc. deg}}_{\bf Q}\, k({\rm{periods}}(M)) = {\rm{dim}}\, G_{k, \bf Q}(M) = {\rm{dim}}\, G_{mot}(M)$$
 which can be studied separately.  
 On the other hand, one can weaken inequality $\geq$ in $(??)$ on replacing $G_{mot}(M)$ by $MT(M)$ (but the reverse inequality becomes unclear; and in doing so, one 
 looses the essence of the conjecture, which is that relations between periods should come from geometry\footnote{but in the special case of $1$-motives, one actually looses nothing by my aformentioned results.}). What is coined ``motivic periods" or ``formal periods" in the literature refers essentially to coordinates on $\Pi(M)$, or (less appropriately) on the corresponding torsors under $MTA(M)$ or $G_{k, \bf Q}(M)$, depending on the context; of course, under $(?)$, they may be ``identified" with actual periods.

\smallskip {\bf (Generalized) period conjecture over an arbitrary subfield of $\bf C$.} The first published version of this conjecture (which I made around 1997) is [IM, 23.4]. In analogy with $(??)$, it predicts that for any $k\subset \bf C$, and any (pure or mixed) motive $M$ defined over $k$,
$$(?!)\;\;\; {\rm{transc. deg}}_{\bf Q}\, k({\rm{periods}}(M)) \geq {\rm{dim}}\, G_{mot}(M).$$ Of course, since $k$ may contain the periods, one cannot hope for an equality. 
The first test that I made before stating it was the case of $1$-motives without abelian part, in which case one recovers Schanuel's conjecture. Later, you studied many other cases in detail and gave evidence that this conjecture looks sharp and might be optimal. 

On the other hand, I am not aware of any ``geometric" version of this conjecture in the style of $(?)$\footnote{there are many other related open questions; e.g. recent work by Fresan and Jossen, following an intuition of Kontsevich, has shaped the contours of a theory of ``exponential motives". A period conjecture in the style of $(??)$ may hold for them. Does it follow from $(?!)$?}.

\smallskip {\bf Functional analog.} To be complete, I ought to discuss the long story of the functional analog of Grothendieck's period conjecture and Ayoub's work which settles it (Ann. Maths 2015). But this letter is already too long, and as this lies beyond your query, I will content myself with the following indications. If $M$ is a motive over a function field $k$ in one variable over $\bf C$, one can define periods of $M$ as elements of the completion $\hat k$ at any place of good reduction for $M$.  These periods are solutions of a Picard-Fuchs differential equation (a linear differential equation with coefficients in $k$). One can define a period torsor $\Pi(M)$ in this context, which is a torsor under the differential Galois group (= algebraic monodromy group, since the singularities are regular), as well as a canonical $\hat k$-point. By Kolchin's theorem, the image of this point is Zariski-dense in $\Pi(M)$. It remains to relate the monodromy group to motives. This has been done by Ayoub\footnote{there are unpublished similar works by Nori and by Jossen; and a related published result by Arapura, Adv. Math 233 (2013).}: here, the algebraic mondromy group coincides with the {\it relative motivic Galois group}, i.e. the kernel of the map $G_{mot}(M)\to G_{mot, cst}(M)$ dual to the inclusion of the category of constant motives inside $\langle M\rangle$.

 My Bourbaki survey (1995) touches all the subjects of this letter in greater detail.

\medskip With my best wishes,  

\bigskip
\bigskip
                      \centerline{       Yves.}

\newpage

\section*{Appendix by M. Waldschmidt: Third kind elliptic integrals and transcendence}

This short appendix aims at giving references on papers related with transcendence results concerning elliptic integrals of the third kind. So far, results on transcendence and linear independence are known, but there are very few results on algebraic independence.

 \medskip
 
 In his book on transcendental numbers \cite{Schneider}, Th.~Schneider proposes eight open problems, the third of which is : 
{\it Try to find transcendence results on elliptic integrals of the third kind.}
 
 In \cite[Historical Note of Chapter IV]{Lang}, S.~Lang explains the connections between elliptic integrals of the second kind, Weierstrass zeta function and extensions of an elliptic curve by $\GG_a$. He applies the so--called Schneider--Lang criterion to the Weierstrass elliptic and zeta functions and deduces the transcendence results due to Th.~Schneider on elliptic integrals of the first and second kind. At that time, it was not known how to use this method for proving results on elliptic integrals of the third kind. 
 
 The solution came from \cite{Serre}, where J-P.~Serre introduces the functions $f_q$ (with the notation of \cite{Bertolin}) related to elliptic integrals of the third kind, which satisfy the hypotheses of the Schneider-Lang criterion and are attached to extensions of an elliptic curve by $\GG_m$. This is how the first transcendence results on these integrals were obtained \cite{MWAsterisque,MWCanada}. In \cite{BertrandLaurent}, D.~Bertrand and M.~Laurent give further applications of the Schneider-Lang criterion involving elliptic integrals of the third kind. Applications are given in \cite{Bertrand1983a,Bertrand1983b,Scholl1986}, dealing with the Neron--Tate canonical height on an elliptic curve (including the $p$--adic height) and the arithmetic nature of Fourier coefficients of Eisenstein series. A first generalization to abelian integrals of the third kind is quoted in \cite{Bertrand1983b}. Transcendence measures are given in \cite{Reyssat1980AnnToulouse}.
 
  Properties of the smooth Serre compactification of a commutative algebraic group and of the exponential map, together with the links with integrals, are studied in \cite{FaltingsWustholz1984}. 
  See also \cite{KnopLange1985}. 
 In \cite[Chapter 20 -- Elliptic functions]{Masser2016} (see in particular Theorem 20.11 and exercises 20.104 and 20.105) more details are given on the functions associated with elliptic integrals of the third kind, the associated algebraic groups, which are extensions of an elliptic curve by $\GG_m$, and the consequences of the Schneider-Lang criterion.
 
The first results of linear independence of periods of elliptic integrals of the third kind are due to M.~Laurent \cite{LaurentCrelle1980,LaurentCrelle1982} (he announced his results in 
 \cite{LaurentCRAS,LaurentDPP}). The proof uses Baker's method. More general results on linear independence are due to G.~W\"ustholz \cite{WustholzCrelle1984} (see also \cite[
 \S~6.2]{BakerWustholz2007}), including the following one, which answers a conjecture that M.~Laurent stated in \cite{LaurentCrelle1982} where he proved special cases of it.
 Let $\wp$ be a Weierstrass elliptic function with algebraic invariants $g_2$, $g_3$. Let $\zeta$ be the corresponding Weierstrass zeta function, $\omega$ a nonzero period of $\wp$ and $\eta$ the corresponding quasi-period of $\zeta$. Let $u_1,\ldots,u_n$ be complex numbers which are not poles of $\wp$, which are $\QQ$ linearly independent modulo $\ZZ\omega$ and such that $\wp(u_1),\dots,\wp(u_n)$ are algebraic. Define
 $$
 \lambda(u_i,\omega)=\omega\zeta(u_i)-\eta u_i.
$$ 
 Then 
 the $n+3$ numbers 
 $$
 1,\omega,\eta,\lambda(u_1),\ldots,\lambda(u_n)
$$
are linearly independent over $\Qbar$. 
 
 The question of the transcendence of the nonvanishing periods of a meromorphic differential form on an elliptic curve defined over the field of algebraic numbers is now solved \cite[Theorem 6.6]{BakerWustholz2007}. See also 
\cite{HW2018}, as well as \cite[\S~1.5]{Tretkoff2017} for abelian integrals of the first and second kind.
A reference of historical interest to a letter from Leibniz to Huygens in 1691 is quoted in 
 \cite[\S~6.3]{BakerWustholz2007} 
 and \cite{Wustholz2012}.

The only results on algebraic independence related with elliptic integrals of the third kind so far are those obtained by ƒ.~Reyssat \cite{Reyssat1980Cras,Reyssat1982} and by R.~Tubbs \cite{Tubbs1987,Tubbs1990}. We are very far from anything close to the conjectures in 
\cite{Bertolin}.

For a survey (with an extensive bibliography including 254 entries), see \cite{MW2008}.
 


The references below are listed by chronological order.

\bibliographystyle{plain}

\end{document}